 \newtheorem{theorem}{Theorem}[section]
 \theoremstyle{definition}
 \theoremstyle{remark}
 \newtheorem{remark}[theorem]{Remark}
 \numberwithin{equation}{section}
\title{A Ternary Cahn--Hilliard Navier--Stokes Model for Two Phase Flow with Precipitation and Dissolution\thanks{Acknowledgment:
Funded by the Deutsche Forschungsgemeinschaft (DFG, German
Research Foundation) -- Project Number 327154368 -- SFB 1313.}}
\author{Christian Rohde
\thanks{Institute of Applied Analysis and Numerical Simulation,
		University of Stuttgart,
		Pfaffenwaldring 57,
		70569 Stuttgart, 
		Germany
  (\mbox{christian.rohde@mathematik.uni-stuttgart.de}, \mbox{lars.von-wolff@mathematik.uni-stuttgart.de}).
 }
\and Lars von Wolff\, \footnotemark[2]  
}
\newcommand{\NN}{{\mathbb{N}}}         
\newcommand{\RR}{{\mathbb{R}}}
\newcommand{\SSa}{{\mathcal{S}}}
\newcommand{\vv}{{\mathbf{v}}}
\newcommand{\Phiv}{{\mathbf{\Phi}}}
\newcommand{\nv}{{\mathbf{n}}}
\newcommand{\Jv}{{\mathbf{J}}}
\newcommand{\Sv}{{\mathbf{S}}}
\newcommand{\Av}{{\mathbf{A}}}
\newcommand{\Bv}{{\mathbf{B}}}
\newcommand{\ev}{{\mathbf{e}}}
\newcommand{\xv}{{\mathbf{x}}}
\newcommand{\sv}{{\mathbf{s}}}
\newcommand{\Uv}{{\mathbf{U}}}
\newcommand{\Cv}{{\mathbf{C}}}
\newcommand{\tauv}{{\bm{\tau}}}
\newcommand{\diff}{\,d}											
\newcommand{\eps}{{\varepsilon}}								
\newcommand{\set}[1]{\left\{#1\right\}}            				
\newcommand{\dbdot}{\mathbin{:}}
\newcommand{\jump}[1]{\llbracket #1 \rrbracket}
\begin{document}

\maketitle

\begin{abstract} \noindent We consider the incompressible  flow of two immiscible fluids in the presence of a solid phase that undergoes changes in time  due to
    precipitation and dissolution effects.  Based on
a seminal sharp interface model a phase field approach is suggested that couples the  Navier-Stokes equations and the solid's ion concentration
transport equation  with the Cahn-Hilliard evolution for the phase fields.\\ The model is shown to preserve  the fundamental conservation constraints and to obey the second law of
thermodynamics for a novel free energy formulation. An extended  analysis for vanishing interfacial width reveals  that in this
limit the sharp interface model is recovered, including all
relevant transmission conditions. Notably, the new phase field model is able to realize Navier-slip conditions for solid-fluid interfaces
in the limit.
\end{abstract}
\providecommand{\keywords}[1]{{\textit{Key words:}} #1\\ \\}
\providecommand{\class}[1]{{\textit{AMS subject classifications:}} #1}
\keywords{Fluid flow with reactive transport; Precipitation/dissolution;  Phase field modelling; Asymptotic analysis}
\class{35R35, 35Q35, 76D05, 76T99, 76D45, 35C20}

\section{Introduction}

Multi-phase flow and reactive transport processes are commonly encountered in engineering applications, getting  particularly important in the
context of porous media flow.  Examples comprise processes like concrete carbonation, geological $CO_2$--sequestration involving
 calcite precipitation, ion exchange  in fuel cells or the spreading  of biofilms  in the soil's vadose zone. While the modeling of multi-phase flow is challenging in itself, these applications are even more complex as the involved solid phase can alter the porous medium skeleton in time which in turn
 changes the overall flow dynamics. 
 
In this contribution  we will propose and analyze a mathematical model that governs the incompressible  flow of two immiscible fluids
that interact with each other and a third solid phase composed of a pure mineral material.
This mineral  is supposed to be solvable in exactly one of the fluid phases. We will account  for the process
of precipitation  enlarging the domain occupied by the solid phase, as well as dissolution transferring  solid material
to the fluid phase.
For a  pertinent example  one might think of a mixture of water, oil and natrium chloride, the latter being present as solid, and resolved in water only.

There are multiple approaches to model  evolving interfaces of types fluid-fluid, reactive fluid-solid and  nonreactive fluid-solid  as  encountered in our  multi-phase flow scenario. Physically mostly well-grounded is the sharp interface formulation. The interfaces are represented as codimension-$1$
manifolds, moving according  to their normal velocity. The normal velocity  is determined from transmission conditions that connect to bulk models valid in the respective phases.
An alternative approach is based on phase field modelling. Then, the interface is modeled as a diffuse transition zone of small
 width. Additional order parameters are introduced, approximating the indicator function of each phase in a smooth way. The  evolution equations for the phase field  are combined with the  governing systems for
 the physical quantities like fluid velocity.  A typical requirement on phase field models is thermodynamical
 consistency which  can be achieved  if the  entire set of evolution equations can be understood as  the gradient flow of a free energy functional.
    The free energy functional  is composed  of a bulk free energy, with a minimum for each of the pure phases, and an interfacial energy penalizing large gradients in the phase fields.  The width of the transition zone is controlled by the phase field parameter. If it tends to zero the phase field model should recover the underlying sharp interface approach. The complexity of such phase field models  excludes rigorous treatment but the formal technique of   matched asymptotic expansions can be utilized  to  justify the phase field approach  as an approximation to a sharp interface formulation, see~Ref.~\cite{Caginalp88}.

  The major contribution of this paper is a new phase field model that describes  the motion of two fluidic and a solid phase as described above. Up
  to our knowledge no such phase field model has been proposed before.\\
  First, we explain the underlying sharp interface ansatz in Section \ref{ChapterSI} that fixes the transmission condition between the bulk phases
  via conservation constraints, reactive mass exchange and the interfaces' curvature influence. Notably,  the model
   incorporates a Navier-slip condition at the  fluid-solid  interfaces.  Without the slip condition, classical results\cite{Solonnikov82} show that the sharp interface model would not be well posed.
   
  The phase field model itself, named $\delta$-$2f1s$-model, will be derived in Section  \ref{ChapterDI}, see equations \eqref{rmodel1}--\eqref{rmodel5}.   By construction, solutions  of the    $\delta$-$2f1s$-model will
  obey the physical constraints of total mass, volume fraction  and  ion concentration conservation. Introducing a new free energy function
 it is  proven  that classical solutions of the
  phase field model obey the second law of thermodynamics (see  Theorem \ref{thrmThermo}).
    This is in contrast to previously suggested  phase field models in the  area  of  reactive transport
  (see Ref.~\cite{carina19pre,magnus}) that  lack
  such thermodynamical consistency. The result of Theorem \ref{thrmThermo} relies on the construction of a free energy that  explicitly
  accounts for  the ion concentration and in turn fixes the kinetic reactions at the solid-fluid interfaces. We illustrate the capabilities of the
  $\delta$-$2f1s$-model by a numerical experiment on a channel flow problem and relate it for simplified scenarios to previously suggested phase field models
  in Sections \ref{sNumerics}, \ref{sAlgebra}, respectively. 
  
  To validate our model we investigate  the sharp interface limit in
  Section \ref{ChapterSIL} using  matched asymptotic expansions. The analysis identifies all binary transmission conditions  (and bulk equations) as proposed for the sharp interface ansatz in Section
  \ref{ChapterSI}. Notably, this includes the Navier-slip condition as presented in Section \ref{sSlip}. This result appears to new, not only for ternary
  mixtures but also in the fundamental  context of binary fluid-solid interfaces.

We conclude this introduction relating  the $\delta$-$2f1s$-model to  existing phase field models for incompressible flow problems.
The most commonly used approach  for two-phase flow is to couple the incompressible Navier--Stokes equation with
the Cahn--Hilliard phase field equation. The basic model, called "Model H", was presented by Hohenberg\&Halperin\cite{hohenberg}. From there on a variety
of refined models  has been proposed. An important aspect for us  is the handling of  fluids with different densities. Because the mass
 averaged generalizations proposed by Lowengrub\&Truskinovsky\cite{lowengrub} lead to a non divergence-free vector field, we base our work on the volume-averaged model of
  Abels et al.\cite{agg}.
For a generalization to three fluid phases, Boyer et al.\cite{Boyer06,Boyer10} introduced consistency principles that lead to particular choices of the bulk free energy. Based thereon models for more than three fluid phases have been proposed in e.g. Ref.~\cite{Boyer14,stinner18pre}.
When considering more than two phases, three interfaces can meet at a triple junction. Analysis of this triple junction\cite{Bronsard1993,Garcke98,stinner18pre} shows that the free energy functional implies a contact angle condition between the three interfaces.

For the description of a fluid-solid interface with a phase field model two main ideas can be pursued. Using a model
 for two fluid phases, one can introduce a solid phase as a fluid with very high viscosity  like in Ref.~\cite{Anderson00}.
 In contrast we follow the work of Beckermann\cite{Beckermann} (but see also Ref.~\cite{Beckermann04,Jeong01}), who assigns
 to  the solid a zero-velocity  and solves the flow equations only in the volume fraction occupied by fluid.
Van Noorden\&Eck\cite{noorden2011} incorporated a kinetic reaction at the phase boundary. Based on the more general Diffuse Domain Approach\cite{dda}, Redeker et al.\cite{magnus} proposed a model for precipitation and dissolution in our context, that is one solid and two fluid phases. Both works only consider diffusion in the fluid phase, and  completely ignore the fluid flow. More recently an Allen--Cahn Navier--Stokes model
for reactive one-phase flow with precipitation and dissolution was proposed in Ref.~\cite{carina19pre}.

\section{The Sharp Interface Formulation}
\label{ChapterSI}
In this section we present the free boundary problem which is the basis for the phase field approach 
that will be introduced in Section \ref{ChapterDI}. While most of the governing equations and coupling conditions resemble standard choices, we introduce 
a novel ansatz for the  momentum  in the solid phase and for  its coupling to the fluid phases. In Section \ref{sSlip} we show that this approach 
realizes a Navier-slip boundary condition for the fluid-solid interface. 

We introduce a domain $\Omega\subset \RR^N$, $N\in \set{2,3}$, and assume that it is the disjoint union of domains $\Omega_1(t)$, $\Omega_2(t)$ and $\Omega_3(t)$ for all times $t\in [0,T]$. We interpret $\Omega_1(t)$, $\Omega_2(t)$, $\Omega_3(t)$ as bulk domains which are occupied by fluid phase 1 (e.g.~water), fluid phase 2 (e.g.~oil) and a solid phase, respectively. All bulk domains are time-dependent, as the fluid bulk domains can change by convection and the solid bulk domain by precipitation and dissolution processes. As displayed in Figure \ref{Figure_SIDomain} we denote the interface between $\Omega_i$ and $\Omega_j$ by $\Gamma_{ij}$ ($i<j$). The normal unit vector $\nv\in \SSa^{N-1}$ of the interface $\Gamma_{ij}$ is supposed to point into $\Omega_j$. We call $\Gamma_{12}$ the fluid-fluid interface, and $\Gamma_{13}$ and $\Gamma_{23}$ fluid-solid interfaces. By $\nu \in \RR$ we denote the normal velocity of the interface $\Gamma_{ij}$.

\begin{figure}[!bp]
\centering
 \begin{tikzpicture}
\begin{scope}[scale = 3.5]

\draw (-1,-1) rectangle (1,.9);
\draw (0,0) to[out=125,in=-90] 
		node [fill=white,pos=0.5] {$\Gamma_{12}$} 
		node (n0) [pos=0.8,inner sep=0] {}
		node (n1) [sloped,pos=0.8,yshift=2em,inner sep=0]{}
		(-0.35,0.9);
\draw (0,0) to[out=-130,in=90] 
		node [fill=white,pos=0.5] {$\Gamma_{13}$} 
		node (n2) [pos=0.8,inner sep=0] {}
		node (n3) [sloped,pos=0.8,yshift=2em,inner sep=0]{}
		(-0.2,-1);
\draw (0,0) to[out=20,in=180] 
		node [fill=white,pos=0.5] {$\Gamma_{23}$} 
		node (n4) [pos=0.8,inner sep=0] {}
		node (n5) [sloped,pos=0.8,yshift=-2em,inner sep=0]{}
		(1,0);

\draw[-stealth] (n0) -- node[above]{$\nv$} (n1);
\draw[-stealth] (n2) -- node[above]{$\nv$} (n3);
\draw[-stealth] (n4) -- node[right]{$\nv$} (n5);

\node[align=center] at (-0.6,-0.1) {$\Omega_1$\\ Fluid phase};
\node[align=center] at (0.6,0.6) {$\Omega_2$\\ Fluid phase};
\node[align=center] at (0.6,-0.5) {$\Omega_3$\\ Solid phase};

\end{scope}
\end{tikzpicture}
%
%
 \caption{Partition of $\Omega$ into bulk domains $\Omega_1$, $\Omega_2$, $\Omega_3$ and interfaces $\Gamma_{12}$, $\Gamma_{13}$, $\Gamma_{23}$.}
  \label{Figure_SIDomain}
\end{figure}
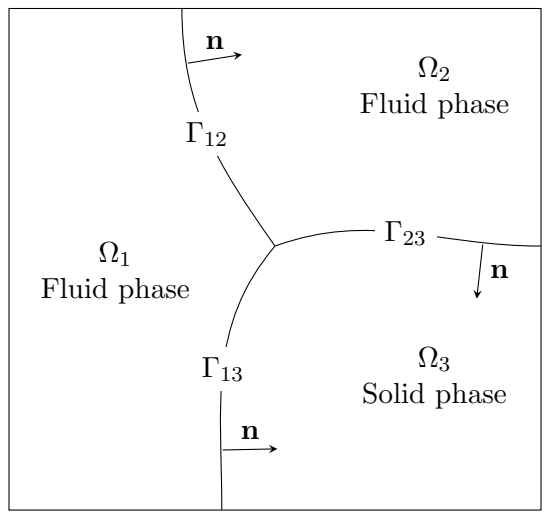

\subsection{The Bulk Equations}
We consider the incompressible flow of the viscous fluid phase $i$ in $\Omega_i(t)$, $i\in\set{1,2}$.  Then,  for a  velocity field $\vv = \vv(t,\xv)\in\RR^N$ and pressure $p=p(t,\xv)\in\RR$ the dynamics is  governed by the incompressible Navier--Stokes equations, that is
\begin{align}
\label{eq_ns1}
\nabla \cdot \vv &= 0, \\
\label{eq_ns2}
\partial_t (\rho_i \vv) + \nabla \cdot \left( \rho_i \vv \otimes \vv \right) + \nabla p &= \nabla \cdot (2 \gamma_i \nabla^s \vv)
\end{align}
in $\Omega_i(t)$, $i\in\set{1,2}$, $t\in(0,T)$. Here, the fluid density $\rho_i>0$ and the viscosity $\gamma_i>0$ are assumed to be constant but are allowed to be different for each fluid phase. The symmetric Jacobian $\nabla^s \vv$ is given by $\nabla^s \vv = \frac{1}{2}\left(\nabla \vv + (\nabla \vv)^t\right)$.

Furthermore, we assume the presence of ions that can dissolve in fluid phase 1 but not in fluid phase 2. Thus we account for the ion-concentration 
$c=c(t,\xv)\ge 0$ in $\Omega_1(t)$ which is supposed to satisfy a standard transport-diffusion equation
\begin{align}
\label{eq_c}
\partial_t c + \nabla \cdot (\vv c) - D \Delta c = 0
\end{align}
in $\Omega_1$, $t\in(0,T)$, using a constant diffusion rate $D>0$. In the solid phase we assume to have a constant ion-concentration $c^\ast>0$.

Albeit the solid phase should be  immobile we impose an artificial velocity field  $\vv=\vv(t,\xv)$ for it that is assumed to satisfy the elliptic law
\begin{align}
\label{eq_solid_v}
\nabla \cdot (2 \gamma_3 \nabla^s \vv) - \rho_3 d_0 \vv = 0
\end{align}
in $\Omega_3$, $t\in(0,T)$, with constants $\gamma_3, d_0 > 0$ and density of the solid phase $\rho_3>0$. Notably equation \eqref{eq_solid_v} has no physical meaning, but will be essential to establish a slip condition for the tangential fluid velocity at the fluid-solid interfaces $\Gamma_{13}(t)$ and $\Gamma_{23}(t)$.

\begin{figure}[!bp]
\centering
\begin{tikzpicture}
\begin{scope}[scale = 3.3]

\draw (-1,-1) rectangle (1,.9);
\draw (0,0) to[out=125,in=-90] 
		node [fill=white,pos=0.65,align=center] {$\nabla c \cdot \nv = 0$} 
		(-0.35,0.9);
\draw (0,0) to[out=-130,in=90] 
		node [fill=white,pos=0.7,align=center] {$\nu = -r(c) + \alpha \sigma_{13}\kappa$ \\  $D \nabla c \cdot \nv = \nu (c^\ast - c)$}
		(-0.2,-1);
\draw (0,0) to[out=20,in=180] 
		node [fill=white,pos=0.55,align=center] {$\nu=0$} 
		(1,0);

\node[align=center] at (-0.55,-0.05) {$\Omega_1$ \\ Transport of ions};
\node[align=center] at (0.6,0.7) { $\Omega_2$};
\node[align=center] at (0.7,-0.5) {$\Omega_3$};
\end{scope}
\end{tikzpicture}  \begin{tikzpicture}
\begin{scope}[scale = 3.3]

\draw (-1,-1) rectangle (1,.9);
\draw (0,0) to[out=125,in=-90] 
		node [fill=white,pos=0.55,align=center] {$\nu=\vv\cdot \nv$ \\$\jump{(p I - \gamma \nabla^s \vv) \cdot \nv} = \sigma_{12} \kappa \nv$} 
		(-0.35,0.9);
\draw (0,0) to[out=-130,in=90] 
		node [fill=white,pos=0.7,align=center] {$\vv \cdot \nv = 0$ \\ $\tfrac{1}{2} \nu \rho \vv \cdot \tau = \jump{\gamma \partial_\nv (\vv \cdot \tau)}$}
		(-0.2,-1);
\draw (0,0) to[out=20,in=180] 
		node [fill=white,pos=0.55,align=center] {$\vv \cdot \nv = 0$\\ $\jump{\gamma \partial_\nv (\vv \cdot \tau)} = 0$} 
		(1,0);

\node[align=center] at (-0.55,-0.05) {$\Omega_1$\\ Navier--Stokes};
\node[align=center] at (0.6,0.7) { $\Omega_2$\\ Navier--Stokes};
\node[align=center] at (0.67,-0.5) {$\Omega_3$\\ Elliptic law \\ for $\vv$};
\end{scope}
\end{tikzpicture}
%
%
 \caption{The bulk equations and interface conditions of the sharp interface model. Left: Equations for ion transport and surface reaction. Right: Flow equations and interface conditions, omitting the condition $\jump{\vv}=0$ that is valid at all interfaces.
 }
  \label{Figure_SI}
\end{figure}
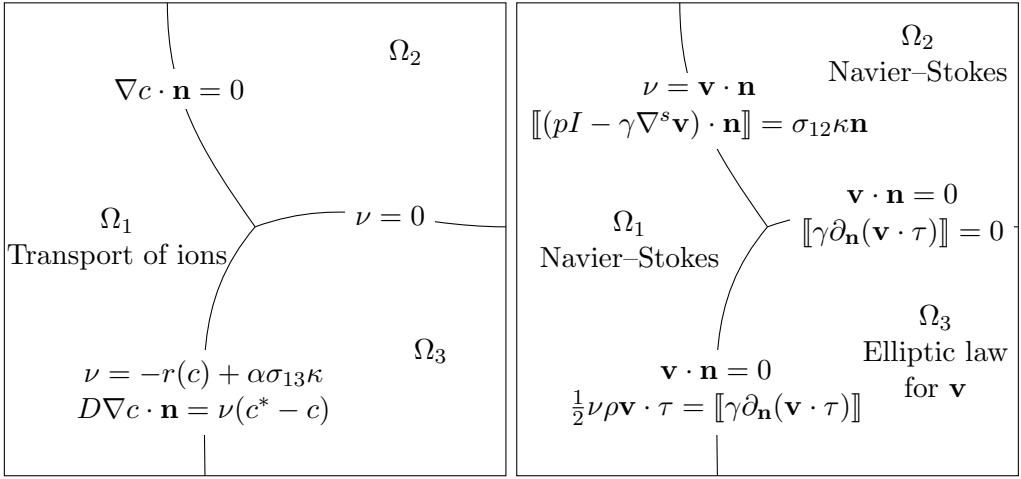

\subsection{The Interface Conditions}
\label{sInterface}
We proceed describing the interfacial dynamics  between the bulk domains. The velocity field $\vv:\Omega_1(t) \cup \Omega_2(t) \cup \Omega_3(t) \to \RR^N$ 
is assumed to be continuous across all domains, i.e.
\begin{align}
\label{eq_v_continuous}
\jump{\vv}&=0 \quad \text{ on }\Gamma_{12},\, \Gamma_{13}\text{ and }\Gamma_{23} .
\end{align}
Here $\jump{a}$ is the jump of a quantity $a(\xv)$ across an interface $\Gamma_{ij}$, that is 
\begin{align*}
\jump{a}(\xv) = \lim_{\xi \to 0^+} \left(a(\xv+\xi\nv) - a(\xv-\xi\nv) \right) \quad \text{ for } \xv\in\Gamma_{ij}.
\end{align*}

The interface conditions between two fluids are determined by the balance laws for mass and momentum. They are given for the Navier--Stokes system by
\begin{align}
\label{eq_s_v}
\nu = \vv \cdot \nv  &\quad \text{ on }\Gamma_{12}, \\
\label{eq_jump_stress}
\jump{(p I - 2 \gamma \nabla^s \vv) \cdot \nv} = \sigma_{12} \kappa \nv  &\quad \text{ on }\Gamma_{12},
\end{align}
involving the normal velocity $\nu$ of the interface, the mean curvature $\kappa$ and the (constant) surface tension coefficient $\sigma_{12}>0$ between the two fluids.

For the fluid-solid interfaces $\Gamma_{13}$ and $\Gamma_{23}$ we impose the conditions
\begin{align}
\label{eq_solid_fluid_v}
\vv \cdot \nv &= 0 &&\text{ on }\Gamma_{13}\text{ and }\Gamma_{23},   \\
\label{eq_solid_fluid_v2}
\frac{1}{2} \nu \rho \vv \cdot \tauv &= \jump{\gamma \partial_\nv (\vv \cdot \tauv)} \qquad \text{for all } \tauv \in \RR^N, \tauv \perp \nv && \text{ on }\Gamma_{13}\text{ and }\Gamma_{23}.
\end{align}
Condition \eqref{eq_solid_fluid_v} is the usual no-penetration condition for fluid flow. Condition \eqref{eq_solid_fluid_v2} will give, together with \eqref{eq_solid_v}, a slip condition for the tangential flow, see Section \ref{sSlip} for details.

\begin{remark}
\label{Remark_rho}
Instead of \eqref{eq_solid_fluid_v} one can impose the more general mass conservation $-\vv \rho_1 \cdot \nv = \nu \jump{\rho}$ on $\Gamma_{13}$. This allows for a volume change related to the reaction process. Under the assumption that the solid phase has the same density as fluid phase $1$, that is $\rho_3 = \rho_1$, there is no volume change and we recover \eqref{eq_solid_fluid_v}. For the sake of simplicity we will present the technically less involved computations resulting from \eqref{eq_solid_fluid_v}.
\end{remark}

It remains to fix the normal velocity of the fluid-solid interfaces $\Gamma_{13}$ and $\Gamma_{23}$, which is given by the rates of precipitation and dissolution. We assume that reactions can only take place between fluid phase 1 and the solid phase, excluding reactions across $\Gamma_{23}$. Precisely, we choose
\begin{align}
\label{eq_s_r_alpha}
\nu = - r(c) + \alpha \sigma_{13} \kappa &\quad \text{ on }\Gamma_{13},  \\
\label{eq_s_0}
\nu = 0 &\quad \text{ on }\Gamma_{23}.
\end{align}

The reaction rate function $r=r(c)$ depends only on the ion concentration $c$ in fluid 1 and models both, dissolution and precipitation. We follow Knabner et al.\cite{Knabner96} without introducing additional effects such as surface charge, and assume $r(c)$ to be monotonically increasing in $c$. 

\begin{remark}
A simple choice for a reaction rate $r(c)$ is given by modelling the rate of precipitation using a quadratic mass action law and the rate of dissolution using a constant rate. With reaction rates $k_1, k_2 > 0$ this means
\begin{align*}
r(c) = k_1 c^2 - k_2.
\end{align*}
\end{remark}

The term $\alpha \sigma_{13} \kappa$ in \eqref{eq_s_r_alpha} models curvature effects acting on the precipitation and dissolution process. While in previous works\cite{carina19pre,magnus} the sharp interface limit of the phase field models required a positive  $\alpha$, we also allow for $\alpha =0$ in our analysis. We will need to distinguish between the cases with and without curvature effects, that is $\alpha > 0$ and $\alpha = 0$, for the free energy functional in Section \ref{sEnergy} and for the asymptotic analysis in Section \ref{sInnerLeading}.

Finally, we need a transmission condition that ensures the conservation of ions. Recall that we assume a constant ion concentration $c^\ast$ in the solid bulk domain $\Omega_3$. For $\Gamma_{13}$ and $\Gamma_{12}$ we thus impose the Rankine-Hugoniot like conditions
\begin{align}
\label{eq_rh}
D \nabla c \cdot \nv = \nu (c^\ast - c)&\quad \text{ on }\Gamma_{13}, \\
\label{eq_rh2}
\nabla c \cdot \nv = 0&\quad \text{ on }\Gamma_{12}.
\end{align}
\subsection{The Contact Angle Condition}
The set of points where the three bulk domains $\Omega_1(t)$, $\Omega_2(t)$, $\Omega_3(t)$ meet consists of manifolds $\Gamma_{123}$ with codimension $2$.  In the two-dimensional case the domains meet at distinct points, while in the three-dimensional case they meet at lines. Let us consider the two-dimensional case first.

Given the surface tension coefficients $\sigma_{12}$, $\sigma_{13}$, $\sigma_{23} > 0$ we impose the contact angle condition
\begin{align}
\label{eq_contactangle}
\frac{\sin(\beta_1)}{\sigma_{23}} = \frac{\sin(\beta_2)}{\sigma_{13}} = \frac{\sin(\beta_3)}{\sigma_{12}},
\end{align}
at $\Gamma_{123}$, where $\beta_i$ is the contact angle of $\Omega_i$ at the contact point. Note that the $\beta_i$ are uniquely determined through \eqref{eq_contactangle} and $\beta_1 + \beta_2 + \beta_3 = 2\pi$.

In the three-dimensional case, we impose condition \eqref{eq_contactangle} on the plane perpendicular to $\Gamma_{123}$.

With this, the description of the sharp interface formulation is complete. It consists of the bulk equations \eqref{eq_ns1}-\eqref{eq_solid_v}, the interface conditions \eqref{eq_v_continuous}-\eqref{eq_rh2} and the contact angle condition \eqref{eq_contactangle}. It is necessary for the well-posedness of the sharp interface formulation that $\beta_3 = \pi$ and that we have the interface condition \eqref{eq_solid_fluid_v2} instead of a no-slip condition. Classical results\cite{Solonnikov82} show that prescribing both, a non-moving contact point and a contact angle, leads to an ill-posed model.

Additional boundary conditions have to be imposed on $\partial \Omega$, for example a Navier-slip condition for $\vv$ and a homogeneous Neumann boundary condition for $c$. For the sake of brevity we will not consider expansions close to the boundary $\partial \Omega$ in the sharp interface limit in Section \ref{ChapterSIL}. 

\subsection{The Navier-Slip Condition}
\label{sSlip}
Before we conclude this section on the sharp interface model, we investigate the effect of the bulk equation \eqref{eq_solid_v} for $\vv$ in the solid domain $\Omega_3$ together with the boundary conditions \eqref{eq_solid_fluid_v2} at the boundary of $\Omega_3$. Given a slip length $L>0$, the Navier-slip condition reads
\begin{align}
\label{eqSlipLength}
\vv \cdot \tauv = -L \partial_\nv (\vv \cdot \tauv) \qquad \text{for all } \tauv \in \RR^N, \,\tauv \perp \nv
\end{align}
at the interfaces $\Gamma_{13}$ and $\Gamma_{23}$, where all variables are evaluated from the side of the fluid bulk phase.  We will show that classical solutions to the sharp interface formulation \eqref{eq_ns1}-\eqref{eq_contactangle} approximately satisfy \eqref{eqSlipLength} with
\begin{align}
\label{eqSlipLength2}
 L = \gamma_1 \sqrt{\frac{2}{ \rho_3 d_0 \gamma_3}}.
\end{align}

For the sake of simplicity we consider a simple planar geometry, i.e. \\$\Omega_3 = \set{\xv\in\RR^N, \xv_1 < 0}$, $\Omega_1 = \RR^N \setminus \Omega_3$, $\Omega_2 = \emptyset$ and let all unknowns only depend on $\xv_1$. Then \eqref{eq_solid_v} reads as
\begin{align*}
2 \gamma_3 \partial_{\xv_1}^2 \vv(\xv_1) -  \rho_3 d_0 \vv(\xv_1) = 0.
\end{align*}
Assuming a bounded velocity profile for $\xv_1 \to -\infty$ we find
\begin{align*}
\vv = \Cv e^{\sqrt{\frac{ \rho_3 d_0}{2 \gamma_3}} \xv_1},
\end{align*}
with some vector constant $\Cv\in\RR^N$. In the solid bulk domain $\Omega_3$ we find up to the boundary $\xv_1 \to 0^-$
\begin{align}
\label{eq_slip_v2}
\partial_\nv (\vv \cdot \tauv) = -\partial_{\xv_1} (\vv \cdot \tauv) = -\sqrt{\frac{ \rho_3 d_0}{2 \gamma_3}} \vv \cdot \tauv.
\end{align}
Assume that there is no reaction, so that \eqref{eq_solid_fluid_v2} reduces to
\begin{align}
\label{eq_solid_v2}
\jump{\gamma \partial_\nv (\vv \cdot \tauv)} = 0.
\end{align}
Recall that by \eqref{eq_v_continuous} $\vv$ is continuous across the interface $\Gamma_{13}$. Therefore, with \eqref{eq_slip_v2} and \eqref{eq_solid_v2} we find at the boundary of $\Omega_1$, that is for $\xv_1\to 0^+$, the Navier-slip condition \eqref{eqSlipLength}, \eqref{eqSlipLength2}.

In a more general geometry we also expect this behavior, as long as the exponential decay of $\vv$ in the interior of $\Omega_3$ is sufficiently fast. For this, the quotient $d_0/\gamma_3$ should be large. As both, $d_0>0$ and $\gamma_3>0$, are non-physical parameters, the slip length $L$ can be chosen while keeping a large quotient $d_0/\gamma_3$.

On the left hand side of \eqref{eq_solid_fluid_v2} we have the term $\frac{1}{2} \nu \rho \vv \cdot \tauv$. This term appears in the sharp interface limit in Section \ref{sInnerFirst}. In general, we expect the normal velocity $\nu$ of a fluid-solid interface to be small, so this term has minor influence on the slip length.

\begin{remark}
To realize a no-slip condition one can choose a large $d_0$ in \eqref{eq_solid_v}. Recalling \eqref{eqSlipLength}, this leads to the slip length $L$ approaching zero. At the same time the quotient $d_0/\gamma_3$ becomes large and we have $\vv \approx 0$ in the solid domain. 

A different approach considered in Ref.~\cite{carina19pre} is to choose $\gamma_3 = 0$. Our analysis in this section does not hold in that case. Instead, \eqref{eq_solid_v} directly results in $\vv=0$ in the solid domain $\Omega_3$ and the continuity of $\vv$ in \eqref{eq_v_continuous} implies a no-slip condition for the fluid. When considering the sharp interface limit for this approach, we do not get the tangential stress balance \eqref{eq_solid_fluid_v2}, as it would over-determine the system.
\end{remark}
\section{The Phase Field Model}
\label{ChapterDI}
\subsection{Preliminaries}

To establish  a phase field model in our case  we  introduce the fields 
\begin{align*}
\phi_1(t,\xv), \phi_2(t,\xv), \phi_3(t,\xv) : [0,T]\times\Omega \to \RR
\end{align*}
that approximate the indicator function of the respective phase in the sharp interface model. We summarize the fields in the vector-valued function
$\Phiv = (\phi_1, \phi_2, \phi_3)^t$ and 
 call $\Phiv = \ev_i$ a pure phase, with $\ev_i\in\RR^3$ being the $i$-th unit vector. In contrast to the sharp interface formulation, $\phi_i$ runs smoothly between $0$ and $1$ in a small layer around the interface. The width of this diffuse transition zone is controlled by a new parameter $\eps>0$. In the limit $\eps \to 0$ the layer collapses to the interface and we expect to regain the sharp interface formulation \eqref{eq_ns1}-\eqref{eq_contactangle}. For this we will consider the sharp interface limit by asymptotic expansions in $\eps$ in Section \ref{ChapterSIL}.

Understanding the smooth phase field parameter $\phi_i$ as a volume fraction of the $i$-th phase we want to ensure that $\Phiv$  satisfies for all $t\in[0,T]$  and $\xv \in \Omega$ the conservation
 constraint
\begin{align}
\label{eq_sum_phi}
\sum_{i=1}^3 \phi_i(t,\xv) = 1, 
\end{align}
and additionally the range restriction  $\phi_i(t,\xv) \in [0,1]$. However, the phase field dynamics will rely on  the fourth--order Cahn--Hilliard evolution, which 
does not satisfy a priori such a maximum principle.  We will  enforce the relaxed constraint $\phi_i(t,\xv) \in (-\delta,1+\delta)$ for some 
small $\delta >0$ by using an unbounded  potential function. To do so, we  define first a double-well potential $W_\text{dw}(\phi)$ by
\begin{align}
\label{modelWdiph}
W_{\text{dw}}(\phi) = 18 \phi^2 (1-\phi)^2 + \delta \ell\left(\frac{\phi}{\delta} \right) + \delta \ell \left( \frac{1-\phi}{\delta} \right) ,\quad \ell(x) = \begin{cases} \frac{x^2}{1+x} & x \in (-1,0), \\ 0 & x \geq 0, \end{cases}
\end{align}
see also Figure \ref{Figure_W}.
\begin{figure}[!bp]
\centering
 \begin{tikzpicture}[samples=100]
\begin{scope}[scale = 3.6]

\draw[-Stealth] (0,0)--(1.8,0) node[below]{$\phi_i$};
\draw[-Stealth] (0,0)--(0,1.0);

\def\dta{0.4}
\def\hscale{4}
\begin{scope}[xscale = 1, shift={(\dta/1.5,0)}]

\begin{scope}[yscale = \hscale]
\draw[domain=0:1] plot (\x, {(\x)^2*(1-\x)^2}) ;
\draw[domain=-\dta/2.7:0] plot (\x, {(\x/\dta)^2/(1+\x/\dta)});
\draw[domain=1:1+\dta/2.7] plot (\x, {((1-\x)/\dta)^2/(1+(1-\x)/\dta)}) node[left]{$W_{\text{dw}}(\phi_i)$};
\end{scope}
\draw (0,0) -- (0,-1pt) node[below]{0};
\draw (1,0) -- (1,-1pt) node[below]{1};
\draw (-\dta/1.5,0) -- (-\dta/1.5,-1pt) node[below]{$-\delta$};
\draw[dotted] (1+\dta/1.5,0) -- (1+\dta/1.5,1.0);
\draw (1+\dta/1.5,0) -- (1+\dta/1.5,-1pt) node[below]{$1+\delta$};
\end{scope}
\end{scope}
\end{tikzpicture}\quad \includegraphics[height=4.6cm, trim = 75 50 75 20, clip]{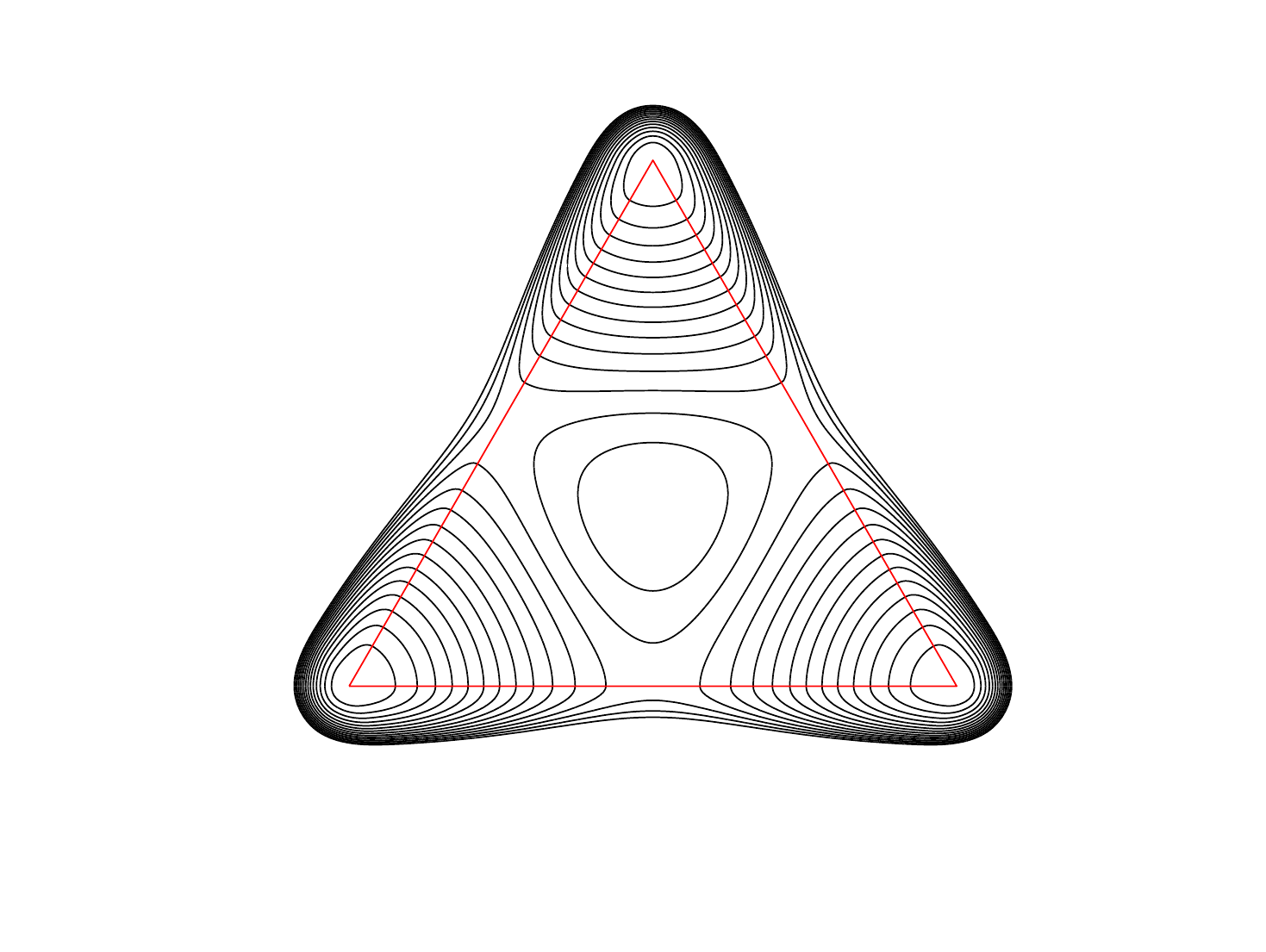}
 \caption{Left: Plot of the double well potential $W_{\text{dw}}$. Right: Contour plot of $W_0$ in barycentric coordinates.}
  \label{Figure_W}
\end{figure}
\begin{remark}[General properties of the double-well potential] 
The choice in  \eqref{modelWdiph} for $W_{\text{dw}}$ is a specific one fitting the phase field model exactly to the sharp interface model, see the analysis in 
Section \ref{ChapterSIL}.  In general   $W_\text{dw}$ should be symmetric, i.e. $W_{\text{dw}}(\phi) = W_{\text{dw}}(1-\phi)$,  it should have minima at $\phi=0$ (and $\phi=1$) and satisfy $\lim_{\phi \to -\delta^+} W_{\text{dw}}(\phi) = \infty$. 
\end{remark}

To define now the potential function  $W(\Phiv): \RR^3 \to \RR$   we note that its choice based on the double-well function  $W_{\text{dw}}$ induces  different surface energies for each 
of the interfaces by different scalings (see also Remark \ref{Remark_sigma}). Based on the work of Boyer et al.\cite{Boyer06,Boyer10} we consider
\begin{align}
\label{modelW0}
W_0(\Phiv) = \sum_{i=1}^3 \Sigma_i W_{\text{dw}}(\phi_i),
\end{align}
with scaling coefficients $\Sigma_i>0$ $(i=1,2,3)$, see also Figure \ref{Figure_W}.
Because $W_0(\Phiv)$ is only a reasonable choice for states $\Phiv$ from  the plane $\sum_i \phi_i = 1$ we introduce a projection $P$ of $\RR^3$ onto this plane by
\begin{align}
\label{eqProjection}
P\Phiv = \Phiv + \Sigma_T(1-\phi_1-\phi_2-\phi_3) \begin{pmatrix}
\Sigma_1^{-1}\\
\Sigma_2^{-1}\\
\Sigma_3^{-1}
\end{pmatrix}
,\qquad \frac{1}{\Sigma_T} = \frac{1}{\Sigma_1}+\frac{1}{\Sigma_2}+\frac{1}{\Sigma_3}.
\end{align}
With the projection we  finally define the potential $W(\Phiv):=W_0(P\Phiv)$. Note that  $W(\Phiv): \RR^3 \to \RR$ is a function  with a minimum in each of the pure phases $\Phiv=\ev_i$.
Moreover, the  choice will ensure in particular that $\Phiv$ satisfies the constraint \eqref{eq_sum_phi}. An equivalent formulation by introducing a Lagrange multiplier for the constraint is given in Ref.~\cite{Boyer06}.

\begin{remark}[Relation of $\Sigma_i$ and $\sigma_{ij}$]
\label{Remark_sigma}
Consider the two--phase case  satisfying $\phi_i+\phi_j = 1$ for $i,j\in \{1,2,3\}, i\neq j$. In this case there are only transition zones  between the pure phases $\Phiv=\ev_i$ and $\Phiv=\ev_j$. Then $W$ reduces  to the scaled double-well potential:
\begin{align*}
W(\Phiv) = \Sigma_i W_{\text{dw}}(\phi_i) + \Sigma_j W_{\text{dw}}(\phi_j) = (\Sigma_i+\Sigma_j) W_{\text{dw}} (\phi_i).
\end{align*}
 In the asymptotic analysis in Section \ref{sInnerLeading} the scaling factor $\Sigma_i + \Sigma_j$ will be identified as the surface energy
  $\sigma_{ij}$ of the sharp interface formulation \eqref{eq_ns1}-\eqref{eq_contactangle}. We therefore have
$\sigma_{ij} = \Sigma_i + \Sigma_j,\,i,j \in\set{1,2,3}$, which
leads to
\begin{align*}
\Sigma_i = \frac{1}{2}\left(\sigma_{ij} + \sigma_{ik} - \sigma_{jk}\right),\quad i,j,k \in\set{ 1,2,3},\quad i\neq j \neq k \neq i.
\end{align*}
In the literature, see e.g. Ref.~\cite{spreading}, $-\Sigma_i$ is known as wetting or spreading coefficient. A negative value of $\Sigma_i$ implies $\sigma_{jk} >\sigma_{ij} + \sigma_{ik}$, that is an interface of phases $j$ and $k$ is energetically less favorable than a thin film of phase $i$ in between these phases, phase $i$ is "spreading".

While $\sigma_{23}$ will have less impact on our model due to scaling, the surface energy $\sigma_{12}$ induces surface tension effects between the two fluids and $\sigma_{13}$ impacts the precipitation and dissolution process.
\end{remark}

\subsection{The $2f1s$-Model}
We proceed to present the complete phase field model coupling  the Cahn--Hilliard equations with the    Navier--Stokes system, describing two fluid phases plus one solid phase ($2f1s$). The total fluid fraction $\phi_f$
and the   ion--dissolving fluid fraction $\phi_c$ are given by 
\begin{align}
\label{modelPhif}
\phi_f(\Phiv) := \phi_1 + \phi_2, \qquad \phi_c := \phi_1.
\end{align}
Furthermore, we define the total density and the fluid density by
\begin{align}
\label{modelRhof}
\rho(\Phiv) := \rho_1 \phi_1 + \rho_2 \phi_2 + \rho_3 \phi_3,\qquad \rho_f(\Phiv) := \rho_1 \phi_1 + \rho_2 \phi_2.
\end{align}
To govern the three-phase dynamics we introduce  for $\eps >0$ the $2f1s$-model
\begin{align}
\nabla \cdot (\phi_f \vv) &= 0, \label{model1}\\
\begin{split}
 \partial_t (\rho_f \vv) + \nabla \cdot ((\rho_f \vv + \Jv_f) \otimes \vv) &= - \phi_f \nabla p + \nabla \cdot(2 \gamma(\Phiv) \nabla^s \vv) \\
&\qquad - \rho_3 d(\phi_f,\eps) \vv + \Sv + \frac 12 \rho_1 \vv R_f, \label{model2}
\end{split}\\
 \partial_t (\phi_c c) + \nabla \cdot ((\phi_c \vv + \Jv_c) c) &= D \nabla \cdot ( \phi_c  \nabla c) + R_c,\label{model3}\\
\partial_t \phi_i + \nabla \cdot (\phi_i \vv + \Jv_i) &= R_i, &&\hspace{-3em} i\in\set{1,2}, \label{model4}\\
\partial_t \phi_3 + \nabla \cdot \Jv_3 &= R_3,  \label{model4b}\\
\mu_i &= \frac{\partial_{\phi_i}W(\Phiv)}{\eps} -  \eps\Sigma_i \Delta \phi_i, &&\hspace{-3em}  i\in\set{1,2,3} \label{model5}
\end{align}
in $(0,T)\times\Omega$. The flux terms are given by
\begin{align}
\label{modelJ}
\Jv_i &= - \frac{\eps}{\Sigma_i} \nabla \mu_i, \quad \text{and}\quad \Jv_f = \rho_1 \Jv_1 + \rho_2 \Jv_2, \qquad \Jv_c = \Jv_1.
\end{align}
The reaction terms $R_1$, $R_2$, $R_3$, $R_c$, $R_f$ modelling  precipitation and dissolution of ions satisfy
\begin{align}
\label{modelR}
R_3 = - R_1, \qquad R_2 = 0, \quad \text{and}\quad \qquad \frac{1}{c^\ast}R_c=-R_3, \qquad R_f&= R_1
\end{align}
It remains to fix $R_1$ which will be derived  in  Section \ref{sEnergy} as a constitutive relation
from thermodynamical considerations.

The term $\Sv$ models the effective surface tension between the two fluids. There are a multitude of choices even 
for the two-phase case, see Ref.~\cite{kimST} for an overview. As generalization to the  three-phase case
which assures thermodynamical consistency (see Theorem \ref{thrmThermo})  we use
\begin{align}
\label{modelS}
\Sv &= -\mu_2 \phi_f\nabla\left(\frac{\phi_1}{\phi_f}\right) -\mu_1 \phi_f\nabla\left(\frac{\phi_2}{\phi_f}\right).
\end{align}
The $2f1s$-model \eqref{model1}-\eqref{model5} is complemented by initial conditions and is subject to the boundary conditions
\begin{align}
\label{bcv}
\vv &= 0,\\
\label{bcc}
\nabla c \cdot \nv_\Omega &= 0,\\
\label{bcphi}
\nabla \phi_i \cdot \nv_\Omega &= 0, \qquad i\in\set{1,2,3},\\
\label{bcmu}
\nabla \mu_i \cdot \nv_\Omega &= 0, \qquad i\in\set{1,2,3}
\end{align}
on $(0,T)\times \partial \Omega$. Here $\nv_\Omega\in \SSa^{N-1}$ denotes the outer normal unit vector on $\partial \Omega$.

\subsection{Discussion of the $2f1s$-Model}

\paragraph{Discussion of Equation \eqref{model1}:} Requiring  $\phi_f \vv$ to be  divergence free replaces 
the usual incompressibility constraint on $\vv$ alone. We follow here the idea of volume averaging presented by Abels et al.\cite{agg}, instead of the 
classical approach by Lowengrub\&Truskinovsky \cite{lowengrub}.  The latter would  not lead to a  divergence-free formulation which 
we favor for numerical reasons. Note that   $\vv$ in \eqref{model1}  has then to be understood as the velocity of the fluid fraction
instead of some average velocity of the full mixture.  In particular, the ansatz prevents advection of  the  phase parameter  $\phi_3$ of the
 solid phase  in the governing equation \eqref{model4b}. 

\begin{remark}  We assume like in Section \ref{ChapterSI} that  the densities $\rho_1$ and $ \rho_3$ equal.
	Otherwise, 	 the reaction process would lead to a change in volume, see also Remark \ref{Remark_rho} and we would loose the incompressibility
	constraint \eqref{model1}. 
	Note  that the  relation $R_3 = -R_1$ in \eqref{modelR} is a special case of the more general mass conservation relation
	 $R_1 \rho_1 + R_3 \rho_3 = 0$ accounting for change in volume. Equation \eqref{model1} would read in this  case as
\begin{align*}
\nabla \cdot (\phi_f \vv) = R_1 + R_3.
\end{align*}
\end{remark}

\paragraph{Discussion of Equations \eqref{model2}:} The momentum equations are formulated for the combined momentum $\rho_f(\Phiv) \vv$ of the two fluid phases and involve the pressure-like term $\phi_f \nabla p$. Note that this term is not in divergence form anymore, due to the fact that the solid phase is assumed to be immobile and can thus act as a sink or source for momentum. This becomes clear by rewriting
\begin{align*}
\phi_f \nabla p = \nabla (\phi_f p) - p \nabla \phi_f.
\end{align*}
The first term on the right hand side is now in divergence form. The second term contributes in the interfacial region between the solid and the fluid phases, with $\nabla \phi_f$ being orthogonal to the interface here. It is therefore a normal force acting between the solid phase and the fluid phases.

The viscosity $\gamma$ in \eqref{model2} depends on the phase field parameter $\Phiv$. We choose harmonic averaging of the bulk viscosities from Section \ref{ChapterSI}, i.e.
\begin{align}
\label{model_gamma}
\gamma(\Phiv) = \left( \phi_1\gamma_1^{-1} +  \phi_2\gamma_2^{-1} +  \phi_3\gamma_3^{-1} \right)^{-1}.
\end{align}
Whereas $\gamma_1$ and $\gamma_2$ are physical quantities, note that $\gamma_3$ does not represent a physical viscosity and is used for the Navier-slip condition instead.

In Ref.~\cite{agg} a thermodynamically consistent Cahn--Hilliard model for two-phase flow is constructed by adding a flux term in the momentum equations. We generalize this approach to an additional solid phase by the term $\nabla \cdot (\Jv_f \otimes \vv)$ and obtain a thermodynamically consistent model, see Theorem \ref{thrmThermo} below. The phase field parameter gets transported by both, the fluid fraction velocity $\vv$ and the Cahn--Hilliard fluxes $\Jv_i$. This leads to an additional transport of the momentum of each fluid phase with its respective flux $\Jv_i$.

Next, we discuss the term $d(\phi_f,\eps) \vv$. Here $d(\cdot , \eps)$ can be any smooth, decreasing function with $d(1,\eps)=0$, $d(0,\eps)=d_0>0$ for a constant $d_0$ independent of $\eps$. This term ensures that $\vv$ is small in the solid phase. Similar ideas have been used in Ref.~\cite{Beckermann,carina19pre,ShapeOptim}. While these works get $\vv=0$ in the solid phase, we use the variable to allow for slip at the fluid-solid interface instead, see Section \ref{sSlip}.\\[10pt]
\textbf{Discussion of Equation \eqref{model3}:} The equation for the dissolved ion concentration $c$ consists of transport, diffusion and reaction. Analogously to the momentum equations, we introduce the additional term $\nabla \cdot (\Jv_c c)$ to account for the transport caused by the Cahn--Hilliard equation. The rate of diffusion scales with $\phi_c$, such that there is no diffusion through other phases.\\[10pt]
\textbf{Discussion of Equations \eqref{model4}, \eqref{model4b}, \eqref{model5}:} The phase field parameters $\phi_i$ are governed by a Cahn--Hilliard evolution. It is well known that one can interpret this evolution as a gradient flow to the Ginzburg--Landau free energy
\begin{align}
\label{eq_ginzburg}
f(\Phiv, \nabla \Phiv) &= \frac{W(\Phiv)}{\eps} + \sum_{i=1}^3 \frac {\eps \Sigma_i }{2} |\nabla \phi_i|^2. 
\end{align}
Pure phases $\Phiv = \ev_i$ are minima of the potential $W(\Phiv)$. Phase transitions, that are characterized by large gradients, are penalized in \eqref{eq_ginzburg} through the term $|\nabla \phi_i|^2$. These two energy contributions get weighted by the parameter $\eps$, resulting in phase transitions with a width of order $\eps$. Following Boyer et al.\cite{Boyer06,Boyer10} the coefficients $\Sigma_i$ have no influence on the width of the diffuse transition zone.

The Cahn--Hilliard equations \eqref{model4}, \eqref{model4b}, \eqref{model5} are coupled to the Navier--Stokes equations \eqref{model1}, \eqref{model2} through the advection of $\phi_1$ and $\phi_2$. The solid phase $\phi_3$ is not advected, leading to an effective total flow velocity of $\phi_f \vv$, as described above.

As we will see in Section \ref{sConserved}, solutions to our model satisfy $\sum_{i=1}^3 \phi_i = 1$ and $\sum_{i=1}^3 \mu_i = 0$. As a consequence one of the  equations for the three phase field parameters can be eliminated.

\subsection{The $\delta$-$2f1s$-Model}

For the $2f1s$-model we are not able to achieve thermodynamical consistency without the following modification.
We need to avoid that quantities like $\phi_f$ and $\rho_f$ from \eqref{modelPhif} and \eqref{modelRhof} can attain negative values, leading to a 
 degeneration of the model. Therefore, we redefine these quantities using the small parameter $\delta$  which has been used 
 to define the double-well potential in \eqref{modelWdiph}.
\begin{align}
&\tilde \phi_f := \phi_1+\phi_2 + 2\delta \phi_3, \label{rmodel_phif}\\
&\tilde \rho_f(\Phiv) := \rho_1 \phi_1 + \rho_2 \phi_2 + (\rho_1+\rho_2) \delta, \label{rmodel_rhof}\\ 
&\tilde \gamma(\Phiv) := \left(\phi_1 \gamma_1^{-1}+ \phi_2 \gamma_2^{-1} + \phi_3 \gamma_3^{-1}  + (\gamma_1^{-1}+\gamma_2^{-1}+\gamma_3^{-1})\delta\right)^{-1}, \label{rmodel_gamma}\\
&\tilde \phi_c := \phi_1 + \delta. \label{rmodel_phic}
\end{align}

It is straightforward to see that these quantities  are positive if  $\phi_i > -\delta$ and \eqref{eq_sum_phi} hold.
Note that the  double-well function $W_\text{dw}(\phi_i)$ from \eqref{modelWdiph} diverges at $\phi_i = -\delta$ and $\phi_i = 1+\delta$. This will imply $\phi_i \in (-\delta,1+\delta)$ by establishing an energy estimate in Section \ref{sEnergy}.

 We proceed to formulate the $\delta$-$2f1s$-model by
\begin{align}
\nabla \cdot (\tilde \phi_f \vv) &= 0, \label{rmodel1}\\
\begin{split}
 \partial_t (\tilde\rho_f \vv) + \nabla \cdot ((\rho_f \vv + \Jv_f) \otimes \vv) &= - \tilde\phi_f \nabla p + \nabla \cdot (2 \tilde \gamma(\Phiv) \nabla^s \vv) \\
 &\qquad - \rho_3 d(\tilde\phi_f,\eps) \vv + \tilde \Sv + \frac 12 \rho_1 \vv R_f, \label{rmodel2}
 \end{split}\\
 \partial_t (\tilde\phi_c c) + \nabla \cdot ((\phi_c \vv + \Jv_c) c) &= D \nabla \cdot ( \tilde\phi_c  \nabla c) + R_c,\label{rmodel3}\\
\partial_t \phi_i + \nabla \cdot (\phi_i \vv + \Jv_i) &= R_i, &&\hspace{-3em}  i\in\set{1,2}, \label{rmodel4}\\
\partial_t \phi_3 + \nabla \cdot (2\delta \phi_3 \vv + \Jv_3) &= R_3,  \label{rmodel4b}\\
\mu_i &= \frac{\partial_{\phi_i}W(\Phiv)}{\eps} -  \eps\Sigma_i \Delta \phi_i, &&\hspace{-3em}  i\in\set{1,2,3} \label{rmodel5}
\end{align}
in $(0,T)\times\Omega$. The modification also affects the surface tension term $\Sv$, such that we are led to replace $\Sv$ in \eqref{model2} by $\tilde \Sv$ with
\begin{align}
\label{rmodelS}
\tilde \Sv &= -\mu_2 \tilde \phi_f\nabla\left(\frac{\phi_1}{\tilde\phi_f}\right) -\mu_1 \tilde\phi_f\nabla\left(\frac{\phi_2}{\tilde\phi_f}\right)  - 2\delta \phi_3 \nabla (\mu_3-\mu_1-\mu_2) .
\end{align}

\begin{remark} Note that for $\delta \to 0$ the double-well function $W_{\text{dw}}(\phi_i)$ converges point-wise to a potential of double-obstacle type, i.e.
\begin{align}
W_{\text{dw},0}(\phi_i) = \phi_i^2 (1-\phi_i)^2 + \ell_o\left(\phi_i\right) + \ell_o\left( 1-\phi_i\right) ,\quad \ell_o(x) = \begin{cases} \infty &  x < 0 \\ 0 & x \geq 0 \end{cases}
\end{align}
and we need to interpret $W_{\text{dw},0}'$ as a set-valued subderivative. The Cahn--Hilliard equation with double obstacle-potential has been thoroughly studied, see for example Ref.~\cite{bloweyElliott}. 
While this ansatz does not require any modification to $\phi_f$, $\rho_f$, $\phi_c$ and $\gamma$, the resulting model will include variational inequalities, which we aim to avoid.
\end{remark}

From here on we will consider only the $\delta$-$2f1s$-model \eqref{rmodel1}-\eqref{rmodel5}.
\subsection{Numerical Example}
\label{sNumerics}
Before we analyze the $\delta$-$2f1s$-model, let us illustrate the capability of the model by a numerical example. The equations were discretized using the Galerkin-FEM method. Taylor-Hood elements were used for $\vv$ and $p$, and $P_2$-Lagrange elements for $\phi_1$, $\phi_2$, $\mu_1$, $\mu_2$, $c$. The implementation was done in PDELab\cite{pdelab} using DUNE-ALUGrid\cite{alugrid}.

We consider  initially a  solid nucleus ($\phi_3$, red) in a channel flow ($\phi_1$, dark blue). 
Attached is a part of the second fluid phase ($\phi_2$, light blue).  The initial datum is displayed in  Figure \ref{Figure_Nucleus}, top left.
The upper and lower boundaries are impermeable while the left(right) boundary  acts as inflow(outflow) boundary.
Due to a flow from the left, the second fluid phase gets pushed behind the nucleus (see Figure \ref{Figure_Nucleus}, top right/bottom left.
Because the ion concentration at  the inflow boundary is oversaturated, the nucleus begins to grow as can clearly be seen from the last graph in 
Figure \ref{Figure_Nucleus}.

\begin{figure}[!bh]
\begin{center}
\includegraphics[width=0.4\textwidth]{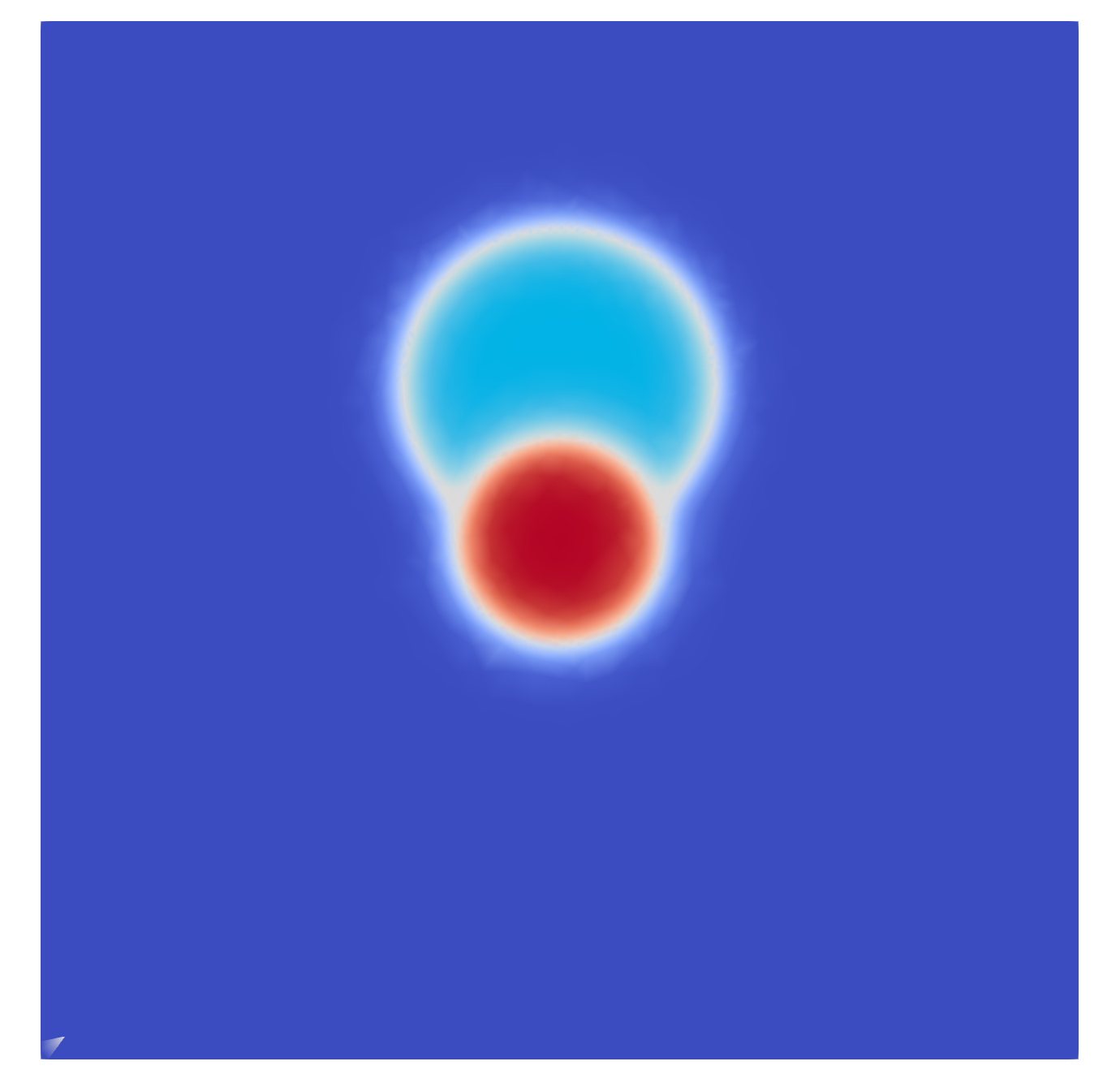}
\includegraphics[width=0.4\textwidth]{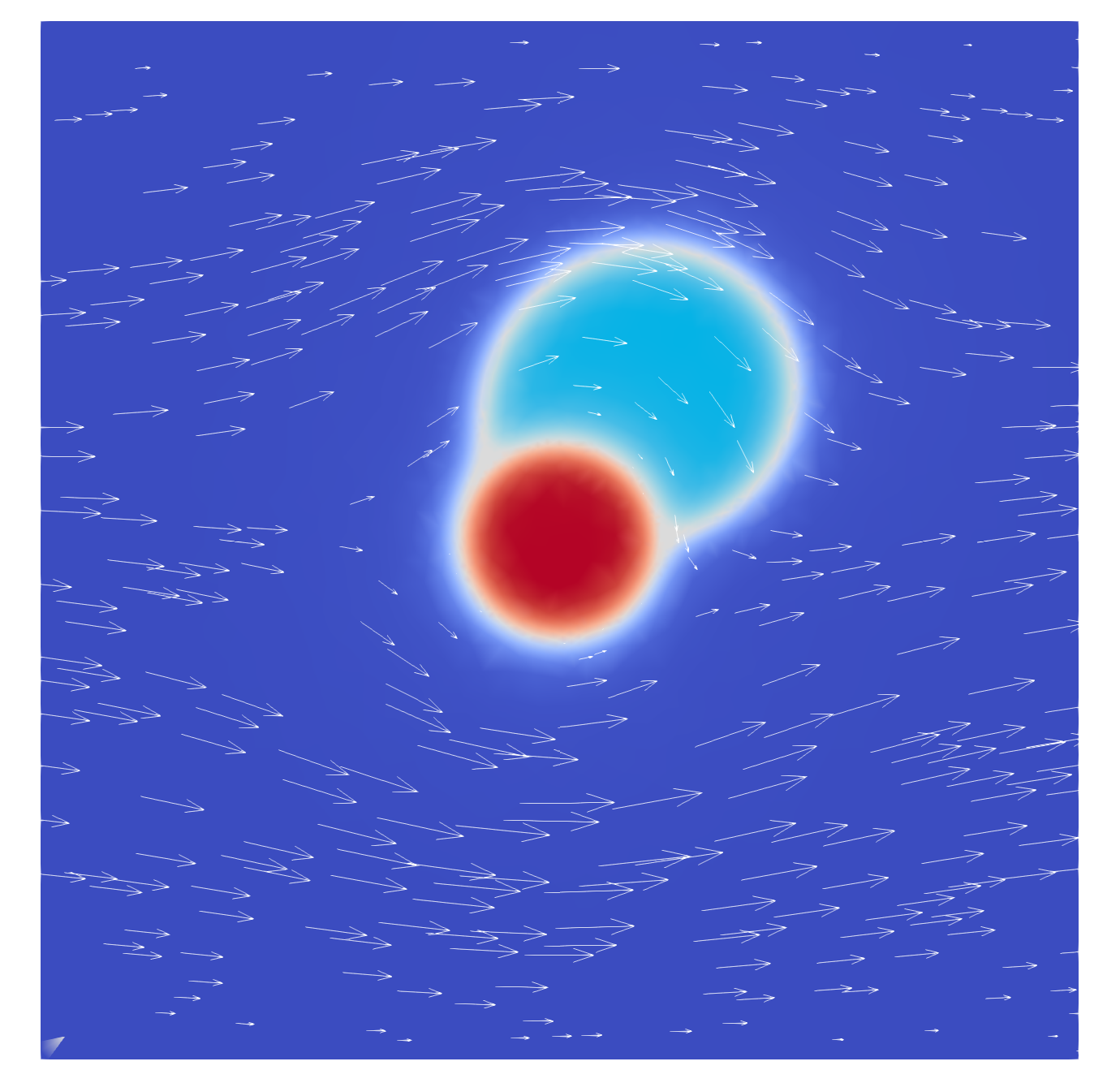}
\\
\includegraphics[width=0.4\textwidth]{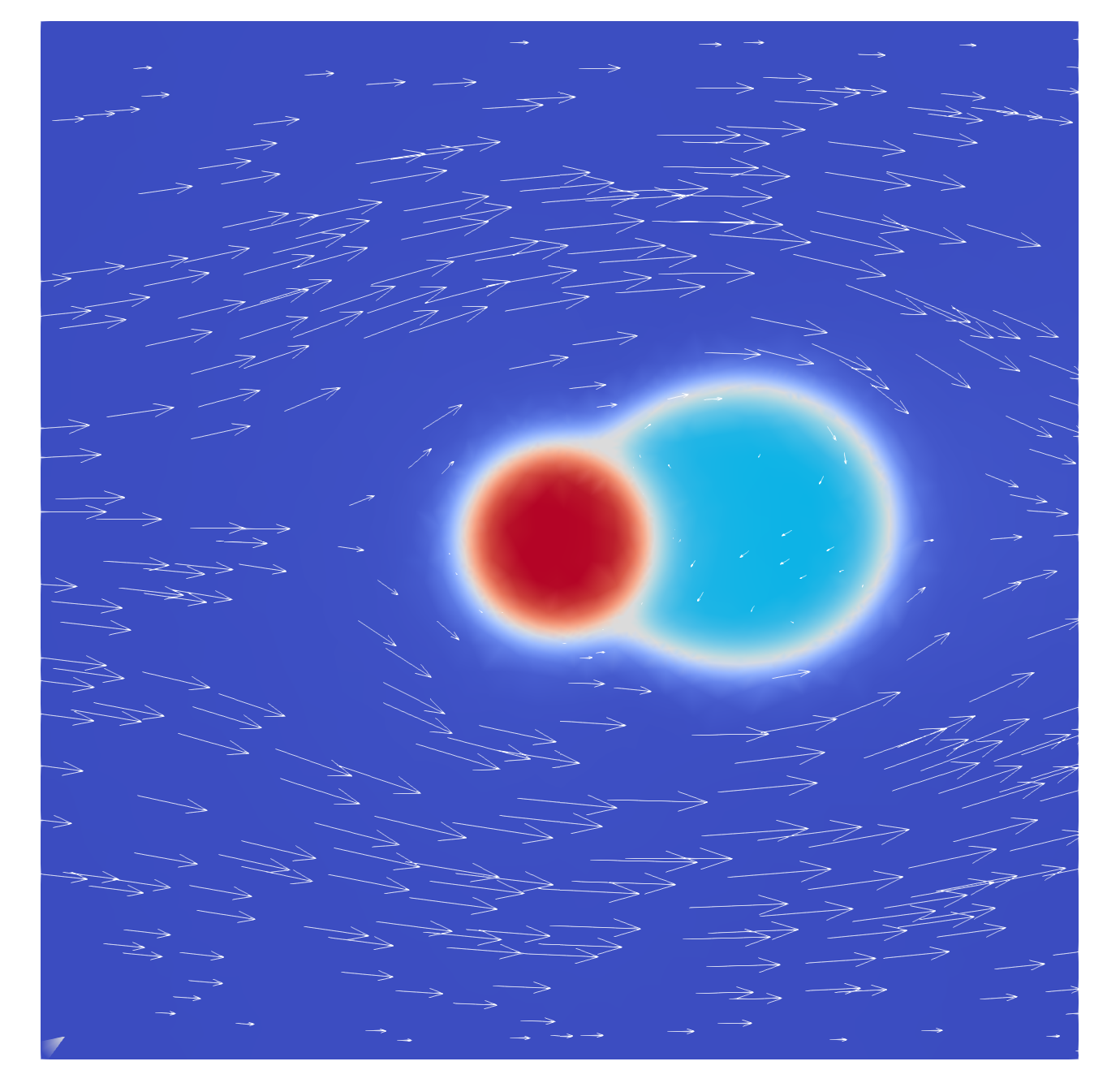}
\includegraphics[width=0.4\textwidth]{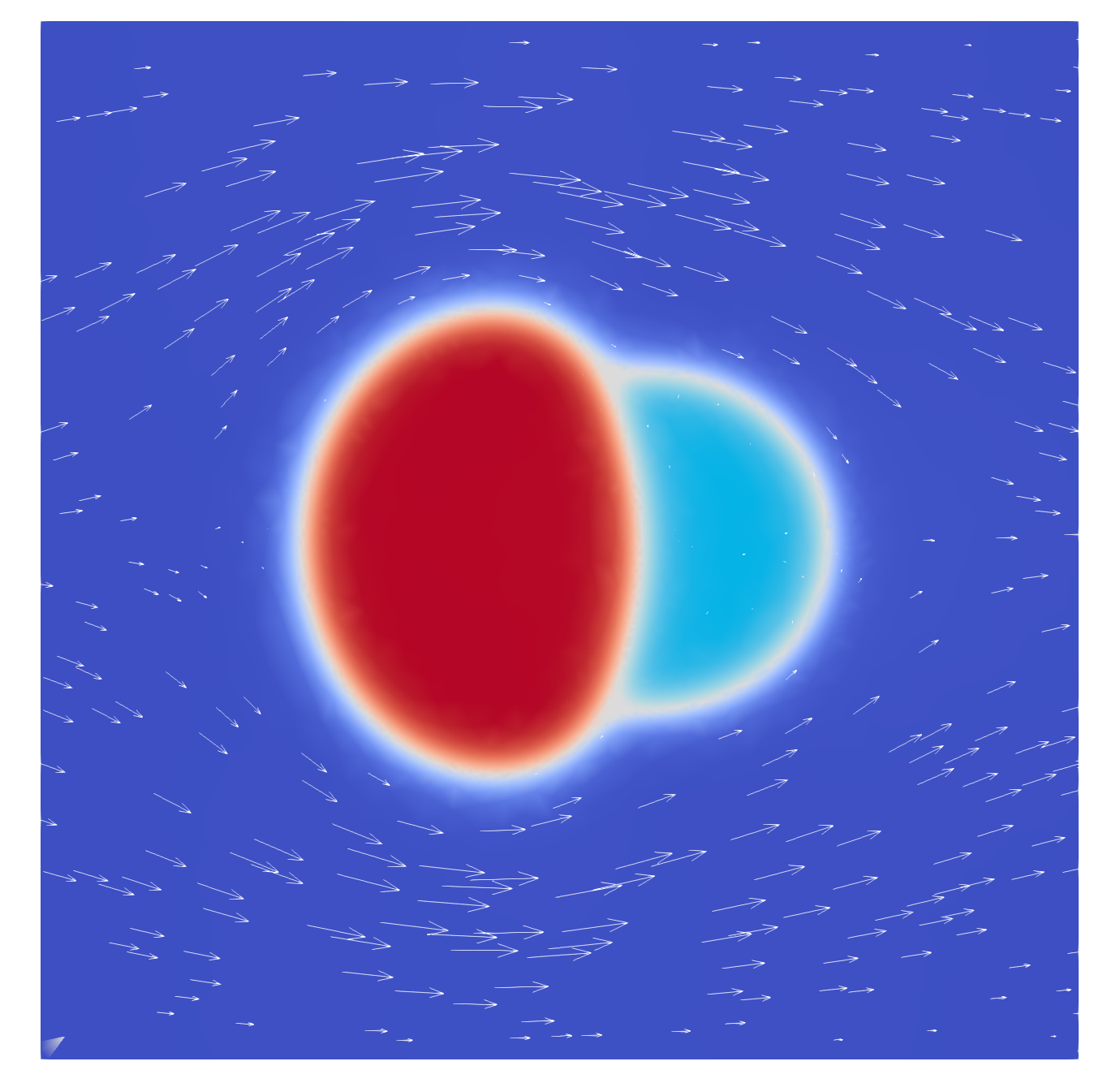}
\end{center}
\caption{Growth of a solid nucleus in a channel flow, with attached fluid phase $\Phiv=\ev_2$. Top left: initial data, top right: $t=0.5$, bottom left: $t=2$, bottom right: $t=8$}
\label{Figure_Nucleus}
\end{figure}

\subsection{Conservation of Total Mass, Ions and Volume Fraction}
\label{sConserved}
Consider the $\delta$-$2f1s$-model with boundary conditions \eqref{bcv}-\eqref{bcmu}. The phase field equations are written in divergence form, so it is easy to see that for classical solutions we have
\begin{align*}
\frac{\diff}{\diff t} \int_\Omega \phi_i \diff \xv &= \int_\Omega R_i \diff \xv,
\end{align*}
i.e. the phase field variables are balanced by the reaction terms only. Using \eqref{modelR} this implies that the total mass $\rho(\Phiv)$ from \eqref{modelRhof} is conserved, that is
\begin{align*}
\frac{\diff}{\diff t} \int_\Omega \rho(\Phiv) \diff \xv&= \int_\Omega \rho_1 R_1 + \rho_2 R_2 + \rho_1 R_3 \diff \xv = 0.
\end{align*}
Moreover, the total amount of ions, given by $\tilde \phi_c c + \phi_3 c^\ast$, is invariant because \eqref{modelR} implies
\begin{align*}
\frac{\diff}{\diff t} \int_\Omega \tilde \phi_c c + \phi_3 c^\ast \diff \xv &= \int_\Omega R_c + c^\ast R_3 \diff \xv = 0.
\end{align*}

Finally, classical solutions of the $\delta$-$2f1s$-model obey also \eqref{eq_sum_phi} provided \eqref{eq_sum_phi} is satisfied initially. This is due to our construction of the triple-well function $W(\Phiv)=W_0(P\Phiv)$ with the projection $P$ from in \eqref{eqProjection}. $W$ is constant in the direction $(\Sigma_1^{-1},\Sigma_2^{-1},\Sigma_3^{-1})^t$, therefore
\begin{align*}
\sum_{i=1}^3 \frac{\mu_i}{\Sigma_i} &= \frac{1}{\eps}\sum_{i=1}^3 \frac{ \partial_{\phi_i} W(\Phiv)}{\Sigma_i} - \eps \Delta \sum_{i=1}^3 \phi_i = \frac{\partial_{(\Sigma_1^{-1},\Sigma_2^{-1},\Sigma_3^{-1})^t} W(\Phiv)}{\eps} - \eps \Delta 1 = 0,
\end{align*}
and thus we get the desired conservation of volume fractions as
\begin{align*}
\frac{\diff} {\diff t}\sum_{i=1}^3 \phi_i &= \sum_{i=1}^3 R_i - \nabla \cdot (\tilde \phi_f \vv)+ \eps \Delta \sum_{i=1}^3 \frac{\mu_i}{\Sigma_i} = 0.
\end{align*}

\subsection{Thermodynamical Consistency} 
\label{sEnergy}

Interpreting the Cahn--Hilliard equation as a gradient flow of the Ginzburg--Landau energy \eqref{eq_ginzburg} and following the ideas in Ref.~\cite{agg} the $\delta$-$2f1s$-model can be shown to be thermodynamically consistent. That is, we find a free energy functional satisfying a dissipation inequality along the evolution of the $\delta$-$2f1s$-model. In our case it is
\begin{align}
\label{eqEnergy}
F(\Phiv,\nabla\Phiv,v,c) &= \int_\Omega \frac 12 \tilde \rho_f |\vv|^2 + f(\Phiv, \nabla \Phiv) + \frac{1}{\tilde \alpha} g(c) \tilde \phi_c \diff \xv.
\end{align}
This free energy functional consists of three parts: The kinetic energy of the fluid phases, the Ginzburg--Landau energy \eqref{eq_ginzburg}, and a third term $\tilde \alpha^{-1} g(c) \tilde \phi_c$. The last term represents the free energy associated with the fluid-ions mixture, depending only on the ion concentration. Note that precipitation and dissolution can increase the surface area between the fluid and the solid phase (and thus the Ginzburg--Landau energy $f$), so they have to decrease the free mixture energy $g(c)$ at the same time. 

With this in mind, we choose the specific form of the up to now free function $R_1$ as
\begin{align}
\label{modelrR}
r(c) := g(c) + g'(c)(c^\ast - c), \qquad R_1  = -\frac{q(\Phiv)}{\eps}  \left(r(c)+ \tilde \alpha \mu_1-\tilde \alpha \mu_3\right).
\end{align}
The choice of $r(c)$ is motivated by the following. Consider \eqref{rmodel3}-\eqref{rmodel5} for constant initial values and with $\vv=0$. From equations \eqref{rmodel3}, \eqref{rmodel4} we can infer the conservation of ions $\partial_t(\tilde\phi_c c + (1-\tilde\phi_c) c^\ast ) = 0$, and have therefore an implicit relation for $c = c(\tilde \phi_c)$. Under these conditions $r$ is given by the derivative of the free energy with respect to $\tilde \phi_c$, that is
\begin{align*}
r &= \frac{\diff }{\diff \tilde \phi_c }\Big(g(c(\tilde \phi_c))\tilde \phi_c\Big) .
\end{align*}
As stated in Section \ref{sInterface} we consider reaction terms $r(c)$ that are increasing in $c$.   A short calculation shows that there is in fact a bijection between convex $g(c)$ and increasing $r(c)$. We will therefore assume $g(c)$ to be convex in the following.

The reaction term $R_1$ does not only depend on $r$ but also on the phase field potentials $\mu_1$ and $\mu_3$. These represent the influence of curvature effects on the reaction. As described in \eqref{eq_s_r_alpha} this effect should scale with a chosen constant $\alpha \in [0,\infty)$. The case $\alpha = 0$ requires extra care. We therefore introduce a modified $\alpha$ as
\begin{align}
\label{rmodel_alpha}
\tilde \alpha = \begin{cases}
\alpha & \text{for } \alpha > 0,\\
\eps & \text{for } \alpha = 0.
\end{cases}
\end{align}

Furthermore, we localize the reaction to the fluid-solid interface by choosing the function $q(\Phiv)$ as
\begin{align*}
q(\Phiv) = 6 \phi_1\phi_3.
\end{align*}
\begin{remark}
\label{Remark_q}
This is a similar choice as in Ref.~\cite{magnus}, where fluid motion is ignored. The situation is more intricate here. In general, we require $q=0$ when $\phi_1=0$ or $\phi_3 = 0$. Furthermore, across an interface between phase $\Phiv=\ev_1$ and phase $\Phiv=\ev_3$ we require $q(\Phiv) = \sqrt{2 W_{\text{dw}}(\phi_1)}$.
\end{remark}
To derive a thermodynamically consistent model we had to introduce the flux terms in \eqref{modelJ} and the specific choice of the reaction $R_1$ in \eqref{modelrR}, and can now prove the following theorem.
\begin{theorem}
\label{thrmThermo}
Classical solutions to the $\delta$-$2f1s$-model which obey the boundary conditions \eqref{bcv}-\eqref{bcmu} and satisfy $F(\Phiv,\nabla\Phiv,v,c) < \infty$ initially
 fulfill for all $t\in (0,T]$ the free energy
 dissipation inequality
\begin{align*}
\frac{\diff }{\diff t} F(\Phiv,\nabla\Phiv,v,c)  &= \int_\Omega - 2 \tilde \gamma(\Phiv) \nabla \vv\dbdot \nabla^s \vv -\rho_3 d(\tilde \phi_f,\eps) \vv^2 - \eps \sum_{i=1}^3 \frac{1}{\Sigma_i}|\nabla \mu_i|^2 \\ 
&\quad - \tilde \alpha^{-1} D g''(c) \tilde \phi_c |\nabla c|^2 - \frac{q(\Phiv)}{\eps \tilde \alpha} \left(r(c)+\tilde \alpha \mu_1-\tilde \alpha \mu_3\right)^2 \diff \xv \\
&\leq 0 .
\end{align*}
\end{theorem}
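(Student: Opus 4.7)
The plan is to differentiate $F$ from \eqref{eqEnergy} term by term and recognize the right-hand side by exploiting the gradient-flow structure of the $\delta$-$2f1s$-model. The three summands of $F$ are respectively dual to the momentum equation \eqref{rmodel2}, the Cahn--Hilliard system \eqref{rmodel4}--\eqref{rmodel5}, and the ion transport equation \eqref{rmodel3}. Accordingly I would test \eqref{rmodel2} with $\vv$; use the chain rule on $f(\Phiv,\nabla\Phiv)$ combined with \eqref{rmodel5} and the boundary conditions \eqref{bcphi}, \eqref{bcmu} to turn $\frac{d}{dt}\int f$ into $\int\sum_i\mu_i\partial_t\phi_i$; and handle the ion term through the splitting
\[
\partial_t(g(c)\tilde\phi_c)=g'(c)\partial_t(\tilde\phi_c c)+\bigl(g(c)-g'(c)c\bigr)\partial_t\tilde\phi_c,
\]
substituting \eqref{rmodel3} and $\partial_t\tilde\phi_c=\partial_t\phi_1$ from \eqref{rmodel4}. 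At the end, the reactive contributions must combine to reveal the constitutive form of $R_1$ fixed in \eqref{modelrR}.

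For the kinetic part I would first derive the auxiliary identity $\partial_t\tilde\rho_f+\nabla\cdot(\rho_f\vv+\Jv_f)=\rho_1 R_f$ from \eqref{rmodel4} together with $R_2=0$. Testing \eqref{rmodel2} with $\vv$ and applying this identity rewrites the inertial plus $\Jv_f$-flux contribution as $\partial_t(\tfrac12\tilde\rho_f|\vv|^2)$ up to a divergence and the term $\tfrac12\rho_1 R_f|\vv|^2$, which cancels the last term on the right-hand side of \eqref{rmodel2}. The pressure term is killed by \eqref{rmodel1} and \eqref{bcv}, while the viscous and drag terms produce the stated $-\int 2\tilde\gamma\,\nabla^s\vv\dbdot\nabla^s\vv$ and $-\int\rho_3 d|\vv|^2$; the surface-tension integral $\int\vv\cdot\tilde\Sv$ is kept aside. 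The Cahn--Hilliard computation then gives
\[
\frac{d}{dt}\int f=\int\bigl[\phi_1\vv\cdot\nabla\mu_1+\phi_2\vv\cdot\nabla\mu_2+2\delta\phi_3\vv\cdot\nabla\mu_3\bigr]-\eps\sum_i\Sigma_i^{-1}|\nabla\mu_i|^2+\sum_i\mu_i R_i.
\]
For the ion term, because $\phi_c=\phi_1$ and $\Jv_c=\Jv_1$, the advective and Cahn--Hilliard-flux contributions arising from $g'(c)\partial_t(\tilde\phi_c c)$ and $(g(c)-g'(c)c)\partial_t\phi_1$ cancel exactly after integration by parts (using $\nabla(g(c)-g'(c)c)=-g''(c)c\nabla c$), leaving only $-\int\tilde\alpha^{-1}D\tilde\phi_c g''(c)|\nabla c|^2$ and a reactive remainder that, upon invoking $R_c=c^\ast R_1$ from \eqref{modelR}, collapses into precisely $\tilde\alpha^{-1}R_1 r(c)$.

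Summing everything, two classes of cross terms remain. The first combines $\int\vv\cdot\tilde\Sv$ with the advective pieces $\phi_1\vv\cdot\nabla\mu_1+\phi_2\vv\cdot\nabla\mu_2+2\delta\phi_3\vv\cdot\nabla\mu_3$ from the Cahn--Hilliard energy balance; applying the identity $\tilde\phi_f\vv\cdot\nabla(\phi_i/\tilde\phi_f)=\nabla\cdot(\phi_i\vv)$, which itself relies on \eqref{rmodel1}, followed by integration by parts, the sum collapses to $\int\tilde\phi_f\vv\cdot\nabla(\mu_1+\mu_2)$, which vanishes by one further integration by parts against the volume-averaged incompressibility $\nabla\cdot(\tilde\phi_f\vv)=0$. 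The second is the reactive combination $\sum_i\mu_i R_i+\tilde\alpha^{-1}R_1 r(c)=\tilde\alpha^{-1}R_1[r(c)+\tilde\alpha(\mu_1-\mu_3)]$, which the constitutive choice \eqref{modelrR} rewrites as the non-positive quadratic $-(q(\Phiv)/(\eps\tilde\alpha))[r(c)+\tilde\alpha\mu_1-\tilde\alpha\mu_3]^2$; non-positivity of the whole dissipation follows from the positivity of $\tilde\gamma,d_0,\Sigma_i,\tilde\phi_c$, from convexity of $g$ (equivalent to monotonicity of $r$), and from $q(\Phiv)\ge 0$ where the reaction is localized. The main obstacle is precisely the surface-tension bookkeeping: the three coefficients in \eqref{rmodelS}, especially the extra $-2\delta\phi_3\nabla(\mu_3-\mu_1-\mu_2)$ introduced by the $\delta$-regularization, are designed exactly so that the $\mu_3$ contributions compensate and the residual kills itself via $\nabla\cdot(\tilde\phi_f\vv)=0$; tracking these signs and the interplay of the $\delta$-modified quantities $\tilde\phi_f$, $\tilde\rho_f$, $\tilde\phi_c$ is the delicate part of the computation.
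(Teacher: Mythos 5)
Your proposal is correct and follows essentially the same route as the paper's proof: term-by-term differentiation of $F$, the identity $\partial_t\tilde\rho_f=\rho_1 R_1-\nabla\cdot(\rho_f\vv+\Jv_f)$ to handle the kinetic energy, the splitting $\partial_t(g(c)\tilde\phi_c)=g'(c)\partial_t(\tilde\phi_c c)+(g(c)-g'(c)c)\partial_t\tilde\phi_c$ for the mixture energy, and the cancellation of $\int\tilde\Sv\cdot\vv$ against the advective Cahn--Hilliard terms via $\nabla\cdot(\tilde\phi_f\vv)=0$. The only differences are cosmetic orderings of the integrations by parts (e.g.\ the paper establishes $\int\tilde\Sv\cdot\vv=\int\sum_i\mu_i\nabla\cdot(\phi_i\vv)+\mu_3\nabla\cdot(2\delta\phi_3\vv)$ directly, while you collapse the combined sum to $\int\tilde\phi_f\vv\cdot\nabla(\mu_1+\mu_2)=0$), which lead to the same final identity.
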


\begin{proof}
We treat the time derivative of each of the terms in \eqref{eqEnergy} separately. Let us start with $\partial_t(\tilde \phi_c g(c) )$. Using integration by parts and the homogeneous boundary conditions we have
\begin{align}
\begin{split}
\label{eqThermoG1}
g'(c) \nabla \cdot ( (\phi_c \vv + \Jv_c) c) &= \int_\Omega (\nabla g(c)) \cdot  (\phi_c \vv + \Jv_c) + g'(c) c \nabla \cdot (\phi_c \vv + \Jv_c) \diff \xv\\
&= \int_\Omega -(g(c) - g'(c)c)  \nabla \cdot (\phi_c \vv + \Jv_c)\diff \xv.
\end{split}
\end{align}
Using \eqref{rmodel3}, \eqref{rmodel4} and \eqref{eqThermoG1} we calculate
\begin{align}
\begin{split}
\label{eqThermoG2}
\int_\Omega \partial_t(g(c) \tilde \phi_c) \diff \xv &= \int_\Omega g'(c) \partial_t(\tilde \phi_c c) + (g(c) -g'(c) c)\partial_t \tilde \phi_c \diff \xv \\
&= \int_\Omega g'(c) \big(R_{c} - \nabla \cdot ((\phi_c \vv+ \Jv_c)c) +  D\nabla \cdot (\tilde \phi_c \nabla c)\big) \\
&\qquad + (g(c) -g'(c) c) \big(R_1-\nabla \cdot (\phi_1 \vv+\Jv_1)\big) \diff \xv \\
&= \int_\Omega g'(c) D\nabla \cdot (\tilde \phi_c \nabla c) + R_1(g(c) -g'(c) c + g'(c) c^\ast) \diff \xv \\
&= \int_\Omega g'(c) D\nabla \cdot (\tilde \phi_c \nabla c) + r(c) R_1 \diff \xv.
\end{split}
\end{align}
We require some results from vector calculus: For vector fields $\Av, \Bv$ we have
\begin{align*}
\nabla \cdot (\Av \otimes \Bv) = (\nabla \cdot \Av) \Bv + (\Av \cdot \nabla) \Bv, \qquad
\Bv \cdot \Big( (\Av \cdot \nabla) \Bv \Big) = \frac{1}{2} (\Av \cdot \nabla) |\Bv|^2.
\end{align*}
Using this and partial integration we get
\begin{align}
\begin{split}
\label{eqThermoV1}
\int_\Omega \vv \cdot \Big(\nabla \cdot ((\rho_f \vv +\Jv_f)\otimes \vv)\Big) \diff \xv 
&= \int_\Omega  \vv \cdot \Big((\nabla \cdot (\rho_f \vv+\Jv_f)) \vv + ((\rho_f \vv+\Jv_f) \cdot \nabla) \vv\Big) \diff \xv \\
&= \int_\Omega \vv^2 \nabla \cdot (\rho_f \vv+\Jv_f) + \frac{1}{2} ((\rho_f \vv+\Jv_f) \cdot \nabla) \vv^2 \diff \xv \\
&= \int_\Omega \frac{1}{2} \vv^2 \nabla \cdot (\rho_f \vv+\Jv_f)\diff \xv.
\end{split}
\end{align}
Also, note that
\begin{align}
\begin{split}
\label{eqThermoV2}
\partial_t \tilde \rho_f &= \rho_1 \partial_t \phi_1 + \rho_2 \partial_t \phi_2 \\
&=\rho_1 (R_1-\nabla \cdot (\phi_1 \vv+ \Jv_1)) + \rho_2 (R_2-\nabla \cdot(\phi_2 \vv + \Jv_2)) \\
&=\rho_1 R_1 - \nabla \cdot (\rho_f \vv + \Jv_f).
\end{split}
\end{align}
With \eqref{eqThermoV1} and \eqref{eqThermoV2} we can calculate the time derivative of the kinetic energy as
\begin{align}
\begin{split}
\label{eqThermoV3}
\frac{\diff}{\diff t} \int_\Omega \frac 12 \tilde \rho_f |\vv|^2 \diff \xv
&= \int_\Omega  \vv \cdot \partial_t(\tilde \rho_f \vv) - \frac 12 \vv^2 \partial_t \tilde \rho_f \diff \xv
\\
&= \int_\Omega  - \vv \cdot \big(\nabla \cdot ((\rho_f \vv+\Jv_f)\otimes \vv) \big)  - \tilde \phi_f \vv \cdot \nabla p 
\\
&\qquad  + \vv \cdot \left(\nabla \cdot (2 \tilde \gamma(\Phiv) \nabla^s \vv)\right)
-\rho_3 d(\tilde\phi_f,\eps) \vv^2 +\tilde \Sv\cdot \vv +\vv\cdot \frac 12 \rho_1 \vv R_f
\\
&\qquad - \frac{1}{2}\vv^2 \rho_1 R_1 + \frac{1}{2} \vv^2 \nabla \cdot (\rho_f \vv+ \Jv_f) \diff \xv
\\
&= \int_\Omega \vv \cdot \left(\nabla \cdot (2 \tilde \gamma(\Phiv) \nabla^s \vv)\right) -\rho_3 d(\tilde\phi_f,\eps) \vv^2 +\tilde \Sv\cdot \vv \diff \xv.
\end{split}
\end{align}
Next we consider the surface tension terms. Note that by \eqref{rmodel1}
\begin{align*}
0=\nabla \cdot (\tilde \phi_f \vv) = \nabla\cdot(\phi_1 \vv)+\nabla\cdot(\phi_2 \vv)+\nabla\cdot(2\delta\phi_3 \vv),
\end{align*} 
and using this, partial integration and \eqref{rmodelS} we find
\begin{align}
\begin{split}
\label{eqThermoS1}
&\int_\Omega \mu_1 \nabla\cdot(\phi_1 \vv) + \mu_2 \nabla\cdot(\phi_2 \vv) + \mu_3 \nabla \cdot(2\delta \phi_3 \vv) \diff \xv \\
&\qquad= \int_\Omega -\mu_2 \nabla\cdot(\phi_1 \vv) -\mu_1 \nabla\cdot(\phi_2 \vv) + (\mu_3-\mu_1-\mu_2) \nabla \cdot(2\delta \phi_3 \vv) \diff \xv\\
&\qquad=\int_\Omega -\mu_2 \tilde\phi_f\nabla\left(\frac{\phi_1}{\tilde\phi_f}\right) \cdot \vv -\mu_1 \tilde\phi_f\nabla\left(\frac{\phi_2}{\tilde\phi_f}\right) \cdot \vv - 2\delta \phi_3 \nabla (\mu_3-\mu_1-\mu_2) \cdot \vv \diff \xv\\
&\qquad=\int_\Omega \tilde \Sv\cdot \vv \diff \xv .
\end{split}
\end{align}
With \eqref{eqThermoS1} we calculate the time derivative of the Ginzburg--Landau energy \eqref{eq_ginzburg} to be
\begin{align}
\begin{split}
\label{eqThermoF1}
\frac{\diff}{\diff t} \int_\Omega f(\Phiv, \nabla \Phiv) \diff \xv &= \int_\Omega \sum_{i=1}^3 \left( \frac{\partial f}{\partial \phi_i} - \nabla \cdot \frac{\partial f}{\partial \nabla \phi_i}\right) \partial_t \phi_i \diff \xv = \int_\Omega \sum_{i=1}^3 \mu_i \partial_t \phi_i \diff \xv \\
&= \int_\Omega \sum_{i=1}^3 \mu_i (R_i - \nabla \cdot \Jv_i) -\mu_1 \nabla\cdot(\phi_1 \vv) \\
&\qquad - \mu_2 \nabla\cdot(\phi_2 \vv) - \mu_3 \nabla \cdot(2\delta \phi_3 \vv) \diff \xv \\
&= \int_\Omega (\mu_1 - \mu_3) R_1 - \sum_{i=1}^3 \mu_i \nabla \cdot \Jv_i -\tilde \Sv\cdot \vv \diff \xv .
\end{split}
\end{align}
Finally we calculate with \eqref{eqThermoG2}, \eqref{eqThermoV3} and \eqref{eqThermoF1} for the complete expression
\begin{align*}
\frac{\diff }{\diff t} F(\Phiv,\nabla\Phiv,v,c)&\diff \xv = \frac{\diff}{\diff t} \int_\Omega 
		\frac 12 \tilde \rho_f |\vv|^2
		 + f(\Phiv, \nabla \Phiv)
		 + \tilde \alpha^{-1} g(c) \tilde \phi_c \diff \xv
\\
&= \int_\Omega \vv\cdot \left(\nabla \cdot (2 \tilde \gamma(\Phiv) \nabla^s \vv)\right) -\rho_3 d(\tilde\phi_f,\eps) \vv^2 +\tilde \Sv\cdot \vv
 + (\mu_1 - \mu_3) R_1 
\\
&\qquad	- \sum_{i=1}^3 \mu_i \nabla \cdot \Jv_i -\tilde \Sv\cdot \vv 
+ \tilde \alpha^{-1} D g'(c) \nabla \cdot (\tilde \phi_c \nabla c) + \tilde \alpha^{-1} r(c) R_1	 \diff \xv		
\\
&= \int_\Omega - 2 \tilde \gamma(\Phiv) \nabla \vv \dbdot \nabla^s \vv -\rho_3 d(\tilde\phi_f,\eps) \vv^2 + \sum_i \nabla \mu_i  \cdot \Jv_i \diff \xv\\
&\qquad -\tilde \alpha^{-1} D g''(c)  \tilde \phi_c |\nabla c|^2 + \tilde \alpha^{-1} \left(r(c) + \tilde \alpha \mu_1 - \tilde \alpha \mu_3\right) R_1 \diff \xv.
\end{align*}
A straightforward calculation shows $\nabla \vv \dbdot \nabla^s \vv \geq 0$. The assertion of the theorem follows by inserting the definitions of $\Jv_i$ \eqref{modelJ} and $R_1$ \eqref{modelrR}.
\end{proof}

\subsection{Algebraic Consistency}
\label{sAlgebra}
If one of the three phases is not present, we obtain simplified scenarios which reduce to phase field models that are partly known from literature.\\
 We will study  the cases  with one phase already absent initially.
  As in Ref.~\cite{Boyer06} we will first show that this phase  will not appear at a later point in time. Afterwards we investigate the reduced two-phase models that arise from this simplification. 

Let in the following $i,j,k\in\set{1,2,3}$, $i\neq j\neq k \neq i$. We consider the case $\phi_i= 0$ and $\phi_k+\phi_j = 1$. Using \eqref{modelW0} and \eqref{eqProjection} we calculate
\begin{align*}
\partial_{\phi_i} W(\Phiv) &= \partial_{\phi_i} W_0(P\Phiv) \\
&= \left(1-\frac{\Sigma_T}{\Sigma_i}\right)\partial_{\phi_i} W_0(\Phiv) - \frac{\Sigma_T}{\Sigma_j} \partial_{\phi_j} W_0(\Phiv) - \frac{\Sigma_T}{\Sigma_k} \partial_{\phi_k} W_0(\Phiv) \\
&= \left(\Sigma_i-\Sigma_T\right)\partial_{\phi_i} W_{\text{dw}}(\phi_i) - \Sigma_T \partial_{\phi_j} W_{\text{dw}}(\phi_j) - \Sigma_T \partial_{\phi_k} W_{\text{dw}}(\phi_k) \\
&= 0.
\end{align*}
For the last step recall the definition of $W_{\text{dw}}$, \eqref{modelWdiph}, to see that $\partial_{\phi_i} W_{\text{dw}}(\phi_i) = 0$. Furthermore with $\phi_k+\phi_j=1$ and the symmetry of $W_{\text{dw}}(\phi_i)$ with respect to $\phi_i=1/2$ we have
\begin{align*}
\partial_{\phi_j} W_{\text{dw}}(\phi_j) = \partial_{\phi_j} W_{\text{dw}}(1-\phi_j) = - \partial_{\phi_k} W_{\text{dw}}(\phi_k) .
\end{align*}
Now let us assume initially $\phi_i(0,\cdot) \equiv 0$. We then have
\begin{align*}
\mu_i = \frac{\partial_{\phi_i}W(\Phiv)}{\eps} -  \eps\Delta \phi_i = 0,
\end{align*}
and therefore $\Jv_i = -\eps\Sigma_i^{-1} \nabla \mu_i = 0$. It follows that
\begin{align*}
\partial_t \phi_i = R_i - \nabla \cdot \Jv_i = 0,
\end{align*}
as for $i=1$ or $i=3$ we have $q(\Phiv)=0$ and therefore $R_i$ = 0 in all cases.

This means that $\phi_i\neq 0$ will not appear spontaneously, but only if enforced e.g. through boundary conditions. For the homogeneous boundary conditions of Theorem \ref{thrmThermo} we have $\phi_i\equiv 0$ for all times. 

Before we consider special choices we point out  another simplification for two-phase flow. 
 With the two conditions $\phi_i=0$ and $\phi_j+\phi_k=1$ we can reduce the model to a model for a single phase-field variable, say $\phi_j$. Using \eqref{modelW0} and \eqref{eqProjection} we calculate 
\begin{align*}
\partial_{\phi_j} W(\Phiv) &= \partial_{\phi_j} W_0(P\Phiv) \\
&= \left(1-\frac{\Sigma_T}{\Sigma_j}\right)\partial_{\phi_j} W_0(\Phiv) - \frac{\Sigma_T}{\Sigma_k} \partial_{\phi_k} W_0(\Phiv) - \frac{\Sigma_T}{\Sigma_i} \partial_{\phi_i} W_0(\Phiv) \\
&= \left(\Sigma_j-\Sigma_T\right)\partial_{\phi_j} W_{\text{dw}}(\phi_j) - \Sigma_T \partial_{\phi_k} W_{\text{dw}}(\phi_k) - \Sigma_T \partial_{\phi_i} W_{\text{dw}}(\phi_i) \\
&= \Sigma_j\partial_{\phi_j} W_{\text{dw}}(\phi_j),
\end{align*}
and define
\begin{align*}
\mu := \frac{\mu_j}{\Sigma_j} = \frac{\partial_{\phi_j}W_{\text{dw}}(\phi_j)}{\eps} -  \eps \Delta \phi_j.
\end{align*}

\subsubsection{Solid Phase plus one Fluid Phase ($\delta$-$1f1s$)}
We consider first the two cases $i=1$ and $i=2$. That is, one of the two fluid phases is not present in the model. As a phase field variable we choose the indicator of the remaining fluid phase. That is for $i=1$ we choose $j=2$ and for $i=2$ we choose $j=1$. Note that as calculated above $\mu_i = 0$ and $\Jv_i=0$. The model $\delta$-$2f1s$-model reduces to
\begin{align}
\nabla \cdot (\tilde \phi_f \vv) &= 0, \label{sfmodel1}\\
 \partial_t (\tilde \rho_f \vv) + \nabla \cdot ((\vv - \eps \nabla \mu) \otimes \rho_j \vv) &= -\tilde \phi_f \nabla p + \nabla \cdot (2 \tilde \gamma(\Phiv) \nabla^s \vv) \\
 &\qquad -\rho_3 d(\tilde \phi_f,\eps)\vv + \tilde \Sv + \frac12 \rho_1 \vv R_f,  \label{sfmodel2}\\ 
\partial_t (\tilde \phi_c c) + \nabla \cdot ((\phi_c \vv + \Jv_c)c) &= D\nabla\cdot(\tilde \phi_c \nabla c) + R_c,\label{sfmodel3}\\
\partial_t \phi_j + \nabla \cdot (\phi_j \vv - \eps \nabla \mu) &= R_j, \label{sfmodel4}\\
\mu &= \frac{\partial_{\phi_j}W_{\text{dw}}(\phi_j)}{\eps} -  \eps\Delta \phi_j \label{sfmodel5}
\end{align}
in $(0,T)\times\Omega$. This is a $1f1s$-model for a fluid fraction $\tilde \phi_f = 2 \delta + (1-2\delta) \phi_j$. 
Previously suggested  phase field models for single phase flow with precipitation\cite{carina19pre,noorden2011} are based on the
Allen--Cahn equation and were only able to ensure a global bound on but no dissipation  of the free energy.
 By Theorem \eqref{thrmThermo} the $\delta$-$1f1s$-model \eqref{sfmodel1}-\eqref{sfmodel5}
 for two-phase flow with precipitation/dissolution is also the first phase field model that ensures  energy dissipation.
 
The effective surface tension term reduces to
\begin{align*}
\tilde \Sv = - 2 \delta \phi_3 \nabla(\mu_3-\mu_j) = 2 \delta \sigma_{j3}  (1-\phi_j) \nabla \mu,
\end{align*}
i.e. is only there to keep consistency with the modified $\tilde \phi_f$. 

In the case $i=2$ this model is fully coupled. But for $i=1$ there is no fluid present that dissolves the ions ($\phi_c=\phi_1=0$). Then $R_f = R_c = R_j = 0$ and the ion conservation law \eqref{sfmodel3} is decoupled from the other equations and equals the diffusion equation $\delta\partial_t c = \delta D \Delta c$.

\subsubsection{Two Fluid Phases ($\delta$-$2f0s$)}
We consider the case of two fluid phases. That is we have $\phi_3 = 0$ and reduce the model to the phase field variable $\phi_1$. Note that $\phi_f=\tilde \phi_f = \phi_1+\phi_2= 1$ and $\Sigma_1^{-1}\mu_1 + \Sigma_2^{-1}\mu_2 = 0$. With this the $\delta$-$2f1s$-model reduces to
\begin{align}
\nabla \cdot \vv &= 0, \label{ffmodel1}\\
 \partial_t (\tilde \rho_f \vv) + \nabla \cdot ((\rho_f \vv + \Jv_f) \otimes \vv) + \nabla p &= \nabla \cdot (2 \tilde \gamma(\Phiv) \nabla^s \vv) + \tilde \Sv,  \label{ffmodel2}\\ 
\partial_t (\tilde \phi_c c) + \nabla \cdot ((\phi_c \vv + \Jv_c) c) &= D \nabla \cdot (\tilde \phi_c \nabla c),\label{ffmodel3}\\
\partial_t \phi_1 + \vv\cdot \nabla \phi_1 &= \nabla \cdot (\eps \nabla \mu),\label{ffmodel4}\\
\mu &= \frac{\partial_{\phi_1}W_{\text{dw}}(\phi_1)}{\eps} -  \eps\Delta \phi_1.\label{ffmodel5}
\end{align}
Note that equation \eqref{ffmodel3} does not couple back to the other equations, it is just an advection-diffusion equation for the ion concentration $c$. 

Let us calculate 
\begin{align*}
\Jv_f &= \rho_1 \Jv_1 + \rho_2 \Jv_2 = -\eps (\rho_1 \nabla \mu - \rho_2 \nabla \mu) = \eps (\rho_2 - \rho_1) \nabla \mu
\end{align*}
and
\begin{align*} 
\tilde \Sv&= -\mu_2 \nabla\phi_1 -\mu_1 \nabla\phi_2 \\
&= \Sigma_2 \mu \nabla\phi_1 -\Sigma_1 \mu \nabla(1-\phi_1) \\
&= \sigma_{12} \mu \nabla \phi_1 \\
&= \sigma_{12}\left(\frac{\partial_{\phi_1}W_{\text{dw}}(\phi_1)}{\eps} -  \eps\Delta \phi_1\right) \nabla \phi_1 \\
&= \sigma_{12}\left(\frac{\nabla W_{\text{dw}}(\phi_1)}{\eps} + \eps \nabla\left( \frac{|\nabla \phi_1|^2}{2}\right) - \eps\nabla \cdot (\nabla\phi_1\otimes\nabla \phi_1)\right)
\end{align*}
We can absorb the first two terms by defining a modified pressure $\hat p$. Overall, the momentum equation can now be expressed as
\begin{align*}
 &\partial_t (\tilde \rho_f \vv) +\nabla \cdot (\rho_f \vv \otimes \vv) +\nabla \cdot ((\rho_2 -\rho_1) \eps \nabla \mu \otimes \vv) + \nabla \hat p \\
 &\qquad = \nabla \cdot (2 \tilde \gamma(\Phiv) \nabla^s \vv) - \sigma_{12} \eps\nabla \cdot (\nabla\phi_1\otimes\nabla \phi_1).
\end{align*}
The system is, except for the $\delta$-modification of $\tilde \rho_f$ and $\tilde \gamma$, the diffuse-interface model proposed by Abels, Garcke and Gr\"un\cite{agg} for two-phase flow. 

\section{The Sharp Interface Limit}
\label{ChapterSIL}
We use matched asymptotic expansions to show that the formal asymptotic limit of the $\delta$-$2f1s$-model for $\eps\to 0$ is given by the sharp interface formulation \eqref{eq_ns1}-\eqref{eq_contactangle} presented in Section \ref{ChapterSI}. 
This  technique for the sharp interface limit has been pioneered in Ref.~\cite{Caginalp88} validating the overall phase field modelling approach.\\ 
We will  first introduce the setup and assumptions of our asymptotic analysis. Then we investigate the bulk phases of the system by
 introducing outer expansions. For the interfaces between two phases we introduce inner expansions and matching conditions. In particular we will
 recover all transmission conditions between the phases as introduced in Section \ref{ChapterSI}.
 Finally we 
 consider the triple point by a special expansion.

\subsection{Assumptions and Outer Expansions}
\label{sAssumptions}
An important choice of scaling is $\delta = \eps$, so the $\delta$-modifications vanish in the sharp interface limit $\eps \to 0$. With this choice the structure of the triple-well function $W(\Phiv)$ depends on $\eps$.

We are interested in a regime of solutions where bulk phases, characterized through small gradients in the phase field parameter $\Phiv$, are separated by interfaces. In this regime we assume that $\mu_i$ is only of order $O(1)$, not of order $O(\eps^{-1})$. This can be expected on a $O(1)$-timescale, for a detailed discussion, see Ref.~\cite{pego}. In this regime we also assume that the three bulk phases meet in the two-dimensional case at distinct points, called triple points. In the three-dimensional case they meet at distinct lines, called triple lines.

We assume that we have classical solutions of the $\delta$-$2f1s$-model with finite free energy \eqref{eqEnergy}. This implies in particular $\phi_i \in (-\delta,1+\delta)$.

The minimizers of the Ginzburg--Landau free energy \eqref{eq_ginzburg} that connect $\Phiv = \ev_i$ with $\Phiv = \ev_j$ only attain values along the edge between $\ev_i$ and $\ev_j$ because we followed the construction of Boyer in Ref.~\cite{Boyer06}. As in Ref.~\cite{stinner18pre} we therefore assume that there are no third-phase contributions in the interfacial layers. See \eqref{eq_thirdphasecontribution} below for the exact formulation of this assumption.

We assume now, that away from the interface we can write solutions to the $\delta$-$2f1s$-model in terms of \textbf{outer expansions} of the unknowns $\Phiv$, $\vv$, $p$, $c$, $\mu_1$, $\mu_2$, $\mu_3$. That is, we can write them (exemplarily for $\Phiv$) in the form
\begin{align*}
\Phiv^o(t,\xv) &= \Phiv^o_0(t,\xv) + \eps \Phiv^o_1(t,\xv) + \eps^2 \Phiv^o_2(t,\xv) + \ldots\;,
\end{align*}
where $\Phiv^o_k$, $k\in \NN_0$ do not depend on $\eps$. In particular, we 
use this notation also for non-primary variables, e.g.
\begin{align*}
\tilde \phi^o_{f} &= \phi^o_{f,0} + \eps \phi^o_{f,1} + \ldots
= \left(\phi^o_{1,0}+\phi^o_{2,0}\right) + \eps \left( \phi^o_{1,1} + \phi^o_{2,1} + 2\phi^o_{3,0}\right) + \ldots\;.
\end{align*}

To group terms by powers of $\eps$, we use Taylor expansions of the nonlinearities. If the respective derivatives exist, we have for a generic function $h$ and variable $u=u_0 + \eps u_1 + \ldots$ the expansion
\begin{align*}
h(u) &= h(u_0 + \eps u_1 +\ldots) = h(u_0) + \eps h'(u_0) u_1 + O(\eps^2) .
\end{align*}

\subsection{Solution of Outer Expansions}
\label{sOuter}
\paragraph{Expansion of \eqref{rmodel5}, $O(\eps^{-1})$:}
We first note that $\phi^o_{i}(0,\cdot) \in [0,1]$, as otherwise a small $\eps$ would result in $W(\Phiv^o)=\infty$. To determine the leading order terms, we have to consider three different cases. 

Let us first look at points with $\phi^o_{i,0} \in (0,1)$ for all $i\in\set{1,2,3}$. In this case only the polynomial part of $W_{\text{dw}}$ does contribute to the equations. The leading order terms are
\begin{align}
\label{eqdWdphi}
\partial_{\phi_i} W(\Phiv^o_0) = 0 \qquad i\in\set{1,2,3}.
\end{align}
After some tedious but straightforward calculations, we can find a $\Phiv^o_0$ satisfying \eqref{eqdWdphi}. It is unstable in the sense that it is not a local minimum of $W$. 

Next, consider the case of $\phi^o_{i,0} = 0$ for exactly one $i\in\set{1,2,3}$. We therefore have the expansion $\phi_i^o = \eps \phi^o_{i,1} + O(\eps^2)$ and 
\begin{align}
\begin{split}
\label{eqdWdphi2}
W'_{\text{dw}}(\phi_i^o) &= 2 \phi_i^o - 4 (\phi_i^o)^3 + \ell'\left(\frac{\phi_i^o}{\eps}\right) -  \ell'\left(\frac{1-\phi_i^o}{\eps}\right) \\
&= \ell'(\phi^o_{i,1} + O(\eps)) + O(\eps) \\
&= \ell'(\phi^o_{i,1}) + O(\eps).
\end{split}
\end{align}
Using \eqref{eqdWdphi2} and the identity
 \begin{align*}
\partial_{\phi_i} W(\Phiv) = (\Sigma_i - \Sigma_T) W'_{\text{dw}}(\phi_i) - \Sigma_T W'_{\text{dw}}(\phi_j) - \Sigma_T W'_{\text{dw}}(\phi_k),
\end{align*}
the leading order terms of \eqref{rmodel5} for phase $i$ are given by
\begin{align*}
0=(\Sigma_i - \Sigma_T) \ell'(\phi^o_{i,1}) - \Sigma_T W'_{\text{dw}}(\phi^o_{j,0}) - \Sigma_T W'_{\text{dw}}(\phi^o_{k,0}).
\end{align*}
With $\phi^o_{j,0}+\phi^o_{k,0}=1$ we conclude $\ell'(\phi^o_{i,1})=0$. Thus we have $\phi^o_{i,1}\geq 0$. The leading order terms of $\eqref{rmodel5}$ for the phases $j$ and $k$ result in an unstable solution $\Phiv^o_0$ at $\phi_i = \phi_j = 1/2$.

The last case is $\phi^o_{i,0} = \phi^o_{j,0} = 0$, $\phi^o_{k,0}=1$. With calculations analogous to the previous case, the leading order terms of \eqref{rmodel5} for $\phi_k$ are given by
\begin{align*}
0=-(\Sigma_k - \Sigma_T) \ell'(-\phi^o_{k,1}) - \Sigma_T \ell'(\phi^o_{i,1}) - \ell'(\phi^o_{j,1}).
\end{align*}
As $\Sigma_k - \Sigma_T > 0$ and $\ell' \geq 0$, this implies $\ell'(\phi^o_{i,1}) = \ell'(\phi^o_{j,1}) = \ell'(-\phi^o_{k,1}) = 0$. Thus $\phi^o_{i,1}, \phi^o_{j,1}\geq 0$, $\phi^o_{k,1}\leq 0$. The equations resulting from leading order terms of the other two phases are then trivially fulfilled as well. We have $\Phiv^o_0 = \ev_k$, and this is a stable solution, as it is a local minimum of $W$.

Overall, the only stable solutions to the leading order terms are the pure phases $\Phiv^o_0 = \ev_k$, $k\in\set{1,2,3}$, with the restriction $\phi^o_{i,1}, \phi^o_{j,1}\geq 0$, $\phi^o_{k,1}\leq 0$. The set of points where $\Phiv^o_0 = \ev_k$ corresponds to the bulk domain $\Omega_k$ of the sharp interface formulation described in Section \ref{ChapterSI}.

\paragraph{Expansion of \eqref{rmodel1}, $O(1)$:}
Recall the definition of $\phi_f$ in \eqref{rmodel_phif}. For the case $\Phiv^o_0 = \ev_3$ we do not retrieve any equation because $\phi_f = O(\eps)$. Otherwise we get
\begin{align*}
\nabla \cdot \vv^o_0 = 0,
\end{align*}
which is equation \eqref{eq_ns1} of the sharp interface formulation.

\paragraph{Expansion of \eqref{rmodel3}, $O(1)$:}
In the case $\Phiv^o_0 = \ev_1$ we note that $\tilde \phi_c = 1 + O(\eps)$ and $q\equiv 0$ hold. With this the leading order terms are
\begin{align*}
\partial_t c^o_0 + \nabla \cdot (\vv^o_0 c^o_0) = D \Delta c^o_0 ,
\end{align*}
which is equation \eqref{eq_c} of the sharp interface formulation. In the other cases $\Phiv^o_0 = \ev_2$, $\Phiv^o_0 = \ev_3$ we do not recover any equation.

\paragraph{Expansion of \eqref{rmodel2}, $O(1)$:}
For $\Phiv^o_0 = \ev_1$ or $\Phiv^o_0 = \ev_2$ we have $\tilde \phi_f = 1 + O(\eps)$. Note also that in these cases $R_f=O(\eps)$, $\tilde \Sv=O(\eps)$ and $d(\phi_f,\eps)=O(\eps)$. We retrieve the momentum equations \eqref{eq_ns2}, e.g. for $\Phiv^o_0 = \ev_1$
\begin{align*}
\partial_t (\rho_1 \vv^o_0) + \nabla \cdot (\vv^o_0  \otimes (\rho_1 \vv^o_0)) + \nabla p^o_0 &= \nabla \cdot (2 \gamma(\Phiv^o_0) \nabla^s \vv^o_0).
\end{align*}
On the other hand, for $\Phiv^o_0=\ev_3$ the highest order terms result in
\begin{align*}
\nabla \cdot (2 \gamma(\Phiv^o_0) \nabla^s \vv^o_0) - \rho_3 d(0,\eps)\vv^o_0=0 ,
\end{align*} 
which is equation \eqref{eq_solid_v} of the sharp interface formulation.

\subsection{Inner Expansions and Matching Conditions}
As seen in Section \ref{sOuter}, there are three stable phases, namely $\Phiv^o_0 = \ev_1, \ev_2, \ev_3$. We therefore need to focus on the interfaces between these phases. To do so, we introduce
\begin{align}
\label{eq_gamma}
\Gamma_{ij}(t) = \set{\xv \in \Omega: \phi_i(t,\xv) = \phi_j(t,\xv), \phi_i(t,\xv) > 1/3}.
\end{align}
By our assumption, $\Gamma_{ij}$ is a smooth $(d-1)$-dimensional manifold embedded in $\Omega$ and depending on time. Let $\sv$ be a local parametrization of $\Gamma_{ij}$, so that $\xv(t,\sv)\in\Gamma_{ij}$. By $\nv$ we denote the normal unit vector of $\Gamma_{ij}$, pointing from phase $i$ into phase $j$ for $i<j$. We can use this to define local curvilinear coordinates $(\zeta,\sv)$ near the interface $\Gamma_{ij}$ through
\begin{align*}
\xv(t, \sv, \zeta) &= \xv(t,\sv) + \zeta \nv(t,\sv),
\end{align*}
see Figure \ref{Figure_coord} for an illustration.
\begin{figure}[!bp]
\centering
 \begin{tikzpicture}
\begin{scope}[scale = 3.5]

\draw (0,0) to[out=-30,in=160] 
		node (nx) [fill=white,pos=0.3] {$\Gamma_{ij}$} 
		node (n4) [pos=0.6,inner sep=0] {}
		node (n5) [sloped,pos=0.6,yshift=3em,inner sep=0]{}
		node (n1) [sloped, circle, fill=black,pos=0.6,inner sep=0,minimum size=4pt, yshift=4em] {}
		node (n0) [circle, fill=black,pos=0.6,inner sep=0,minimum size=4pt] {}
		(2,0);

\node[below right] at (n0) {$\xv(t,\sv)$};
\node[right] at (n1) {$\xv(t, \sv,\zeta)$};
\draw[-stealth] (n4) -- node[right]{$\nv(t,\sv)$} (n5);
\node[below right, xshift = 1em, yshift=-0.5em] at (nx) {$\Omega_i$};
\node[above right, xshift = 1em, yshift=0.5em] at (nx) {$\Omega_j$};
\end{scope}
\end{tikzpicture}
 \caption{Local curvilinear coordinates for the interface $\Gamma_{ij}(t)$}
  \label{Figure_coord}
\end{figure}
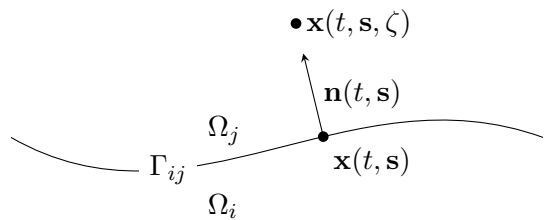
We expect the diffuse interface width being proportional to $\eps$. Therefore let $z=\zeta/\eps$ be a rescaled signed distance to the interface. We denote by
\begin{align*}
\nu = \partial_t \xv(t,\sv) \cdot \nv
\end{align*}
the normal velocity of the interface. For generic scalar and vectorial variables $u$ and $\Uv$ we obtain the transformation rules (see  Ref.~\cite{Caginalp88} and the Appendix of Ref.~\cite{agg})  
\begin{align}
\partial_t u &= - \frac 1\eps \nu \partial_z u + O(1), \\
\nabla u &= \frac 1\eps \partial_z u \nv + \nabla_\Gamma u + O(\eps), \\
\nabla \cdot \Uv &= \frac 1\eps \partial_z \Uv \cdot \nv + \nabla_\Gamma \cdot \Uv + O(\eps),\\
\nabla \Uv &= \frac 1\eps \partial_z \Uv \otimes \nv + \nabla_\Gamma \Uv + O(\eps), \\
\Delta u &= \frac 1{\eps^2} \partial_{zz} u + \frac 1\eps \kappa \partial_z u + O(1),
\end{align}
where $\kappa$ is the mean curvature of $\Gamma_{ij}$ and $\nabla_\Gamma$ denotes the surface gradient on $\Gamma_{ij}$.

We assume that close to the interface we can write solutions to the $\delta$-$2f1s$-model in terms of \textbf{inner expansions} of the form
\begin{align*}
\Phiv^{in}(t,\sv,z) &= \Phiv^{in}_0(t,\sv,z) + \eps \Phiv^{in}_1(t,\sv,z) + \ldots\, ,
\end{align*}
and similarly for all other unknowns.

For outer expansions and fixed $t$ denote the limit $\xv(t,\sv,\zeta) \to \xv(t,\sv,0)$ from positive $\zeta$ by $\sv_+$ and from negative $\zeta$ by $\sv_-$. We match the limit values of outer expansions at $\sv_+$ and $\sv_-$ with the values of the inner expansions for $z\to \pm \infty$. That is, following Ref.~\cite{Caginalp88} we impose for $\Phiv$ (and analogously for all other unknowns) the matching conditions
\begin{align}
\label{matching0}
\Phiv^{in}_0(t,\sv,\pm \infty) &= \Phiv^o_0(t,\sv_\pm), \\
\label{matchingD0}
\partial_z \Phiv^{in}_0(t,\sv,\pm \infty) &= 0, \\
\label{matchingD1}
\partial_z \Phiv^{in}_1(t,\sv,\pm \infty) &= \nabla \Phiv^o_0(t,\sv_\pm) \cdot \nv.
\end{align}
In particular, combining \eqref{matching0} and \eqref{matchingD1} we have for the velocity
\begin{align}
\label{matchingJacobian0}
\partial_z \vv^{in}_1(t,\sv,\pm \infty) \otimes \nv + \nabla_\Gamma \vv^{in}_0(t,\sv,\pm \infty) = \nabla \vv^o_0(t,\sv_\pm).
\end{align}

\subsection{Solution of Inner Expansions, Leading Order}
\label{sInnerLeading}
\paragraph{Expansion of \eqref{rmodel5}, $O(\eps^{-1})$:}
As discussed in Section \ref{sAssumptions} we assume no third-phase contributions in the interfacial layer. In detail, this means that at the interface $\Gamma_{ij}$ we assume $\phi^{in}_{k,0}=0$, where $k\neq i, k\neq j$ is the index of the third phase. We get 
\begin{align}
\label{eq_thirdphasecontribution}
\phi^{in}_{i,0} + \phi^{in}_{j,0} = 1.
\end{align}
The leading order expansion of \eqref{rmodel5} for the third phase $k$ reads
\begin{align*}
0 = (\Sigma_k - \Sigma_T) \ell'(\phi^{in}_{k,1}) - \Sigma_T W'_{\text{dw}}(\phi^{in}_{i,0}) - \Sigma_T W'_{\text{dw}}(\phi^{in}_{j,0}).
\end{align*}
As $\phi^{in}_{i,0} + \phi^{in}_{j,0} = 1$ we conclude $\ell'(\phi^{in}_{k,1}) = 0$ and with this $\phi^{in}_{k,1} \geq 0$. The asymptotic expansion of \eqref{rmodel5} for phase $j$ results in
\begin{align}
\begin{split}
\label{phiinterface}
0 &= \left(\Sigma_j - \Sigma_T\right)W'_{\text{dw}}(\phi^{in}_{j,0})  - \Sigma_T W'_{\text{dw}}(\phi^{in}_{i,0}) - \Sigma_T \ell'(\phi^{in}_{k,1}) - \Sigma_j \partial_{zz} \phi^{in}_{j,0} \\
&= \Sigma_j \left(W'_{\text{dw}}(\phi^j_0) - \partial_{zz} \phi^j_0 \right).
\end{split}
\end{align}
The matching condition \eqref{matching0} implies $\phi^{in}_{j,0}(-\infty)=0$ and $\phi^{in}_{j,0}(\infty)=1$. Following from the definition of $\Gamma_{ij}$ in \eqref{eq_gamma} we also get $\phi^{in}_{j,0}(0)=\frac 12$. With this the solution to \eqref{phiinterface} is given by
\begin{align}
\label{eq_phi0_solution}
\phi^{in}_{j,0}(z) = \frac{1}{2}\left(1+\tanh (3 z) \right).
\end{align}
Note that if we multiply \eqref{phiinterface} by $\partial_z \phi^j_0$, integrate and use the matching conditions \eqref{matching0}, \eqref{matchingD0} we find the equipartition of energy
\begin{align}
\label{equipartition}
W_{\text{dw}}(\phi^{in}_{j,0}) = \frac{1}{2} \left(\partial_z \phi^{in}_{j,0}\right)^2.
\end{align}
The leading order expansion of the Ginzburg--Landau free energy \eqref{eq_ginzburg} reads
\begin{align*}
f(\Phiv^{in},\nabla \Phiv^{in}) &= \eps^{-1} W(\Phiv^{in}_{0}) + \eps^{-1} \Sigma_i  \frac{1}{2} (\partial_z \phi^{in}_{i,0})^2 + \eps^{-1} \Sigma_j  \frac{1}{2} (\partial_z \phi^{in}_{j,0})^2 + O(1) \\
&= \eps^{-1} (\Sigma_i + \Sigma_j) \left( W_{\text{dw}}(\phi^{in}_{j,0}) +  \frac{1}{2}  (\partial_z \phi^{in}_{j,0})^2 \right) + O(1) .
\end{align*}
We can define the surface energy $\sigma_{ij}$ as the integral over the Ginzburg--Landau free energy, that is
\begin{align}
\begin{split}
\label{modelsigma}
\sigma_{ij} :&= \int_{-\infty}^\infty (\Sigma_i + \Sigma_j) \left( W_{\text{dw}}(\phi^{in}_{j,0}) +  \frac{1}{2}  (\partial_z \phi^{in}_{j,0})^2 \right) \diff z \\
&= (\Sigma_i + \Sigma_j) \int_{-\infty}^\infty \left(\partial_z \phi^{in}_{j,0}\right)^2 \diff z \\
&= \Sigma_i + \Sigma_j
\end{split}
\end{align}
where we have used \eqref{equipartition} and an explicit calculation after inserting \eqref{eq_phi0_solution}.
\paragraph{Expansion of \eqref{rmodel1}, $O(\eps^{-1})$:}
Using the transformation rules, the leading order is
\begin{align*}
\partial_z (\phi^{in}_{f,0} \vv^{in}_0 )\cdot \nv = 0.
\end{align*}
Note that with the considerations above, we have $\phi^{in}_{f,0} = \phi^{in}_{1,0} + \phi^{in}_{2,0} > 0$ across all interfaces. For $\Gamma_{12}$ we find by integrating and using matching condition \eqref{matching0}
\begin{align}
\label{vn_bc}
\vv^{in}_0(z) \cdot \nv = \vv^o_0(t,\sv_+) \cdot \nv = \vv^o_0(t,\sv_-)  \cdot \nv \quad \forall z\in(-\infty,\infty),
\end{align}
while for $\Gamma_{13}$ and $\Gamma_{23}$ we find with $\phi^{in}_{f,0}(\infty)=0$
\begin{align}
\label{vn_bc2}
\vv^{in}_0(z) \cdot \nv = \vv^o_0(t,\sv_-) \cdot \nv = 0   \quad \forall z\in(-\infty,\infty).
\end{align}
This is equation \eqref{eq_solid_fluid_v} of the sharp interface formulation.

\paragraph{Expansion of \eqref{rmodel3}, $O(\eps^{-2})$:}
We only consider the cases of $\Gamma_{12}$ and $\Gamma_{13}$. Then $\tilde \phi^{in}_c = \phi^{in}_{1,0} + O(\eps)$ and $\phi^{in}_{1,0}>0$. We note that $R_c$ is of order $\eps^{-1}$ . Therefore we have in leading order
\begin{align*}
\partial_{z} ( \phi^{in}_{1,0} \partial_z c^{in}_0) = 0.
\end{align*}
Then the matching conditions \eqref{matching0}, \eqref{matchingD0} at $z=-\infty$ imply
\begin{align}
\label{eq_c0_const}
c^{in}_0(z) = c^o_0(\sv_-) \quad \forall z\in(-\infty,\infty).
\end{align}

\paragraph{Expansion of \eqref{rmodel2}, $O(\eps^{-2})$:}
Again, note that $R_f$ and $\tilde \Sv$ are of order $\eps^{-1}$, so
\begin{align*}
0 &= \partial_z \Big(\gamma(\Phiv^{in}_0) ((\partial_{z} \vv^{in}_0) \otimes \nv + \nv \otimes (\partial_{z} \vv^{in}_0) )\Big) \nv \\
&= \partial_z (\gamma(\Phiv^{in}_0) \partial_{z} \vv^{in}_0).
\end{align*}
To get to the second line we have used that \eqref{vn_bc}, \eqref{vn_bc2} imply $\partial_z \vv^{in}_0\cdot \nv = 0$. Integrating and using the matching condition \eqref{matchingD0} gives
\begin{align*}
0 = \gamma(\Phiv^{in}_0) \partial_{z} \vv^{in}_0.
\end{align*}
As $\gamma(\Phiv^{in}_0)$ is positive, we find 
\begin{align}
\label{dzv}
\partial_z \vv^{in}_0 = 0.
\end{align}
With matching condition \eqref{matching0} we conclude
\begin{align}
\label{v_bc_1}
\vv^{in}_0(z) = \vv^o_0(t,\sv_-) = \vv^o_0(t,\sv_+) \quad \forall z\in(-\infty,\infty).
\end{align}
This equation is the continuity of $\vv$, given by \eqref{eq_v_continuous}, in the sharp interface formulation.

\paragraph{Expansion of \eqref{rmodel4}, \eqref{rmodel4b}, $O(\eps^{-1})$:}
We consider the interface $\Gamma_{13}$. We obtain for the phase field equations \eqref{rmodel4b} for phase $3$ and \eqref{rmodel4} for phase $1$ in leading order
\begin{align*}
- \nu \partial_z \phi^{in}_{3,0} - \frac{1}{\Sigma_3}\partial_{zz} \mu^{in}_{3,0} = q(\Phiv^{in}_{0}) \left( r(c^{in}_0) + \tilde \alpha_0 \mu^{in}_{1,0} - \tilde \alpha_0 \mu^{in}_{3,0}\right),\\
- \nu \partial_z \phi^{in}_{1,0} - \frac{1}{\Sigma_1}\partial_{zz} \mu^{in}_{1,0} = -q(\Phiv^{in}_{0}) \left( r(c^{in}_0) + \tilde \alpha_0 \mu^{in}_{1,0} - \tilde \alpha_0 \mu^{in}_{3,0}\right).
\end{align*}
Note that by \eqref{rmodel_alpha} we have $\tilde \alpha = \alpha + O(\eps)$. With the notation $\mu_{3-1} := \mu^{in}_{3,0} - \mu^{in}_{1,0}$ we get
\begin{align*}
- \nu \Sigma_3 \partial_z \phi^{in}_{3,0} + \nu \Sigma_1 \partial_z \phi^{in}_{1,0} - \partial_{zz} \mu_{3-1} = (\Sigma_1 + \Sigma_3) q(\Phiv^{in}_{0}) \left( r(c^{in}_0) - \alpha \mu_{3-1}\right).
\end{align*}
As there are no third-phase contributions in leading order we have $\partial_z \phi^{in}_{1,0} + \partial_z \phi^{in}_{3,0}= 0$. By construction of $q$ (see Remark \ref{Remark_q}) and the equipartition of energy \eqref{equipartition} it holds $q(\Phiv^{in}_0) = \partial_z \phi^{in}_{3,0}$. We have
\begin{align}
\label{eq_mu3-1}
- \nu (\Sigma_3+\Sigma_1) \partial_z \phi^{in}_{3,0} - \partial_{zz} \mu_{3-1} = (\Sigma_1 + \Sigma_3) \partial_z \phi^{in}_{3,0} \left( r(c^{in}_0) - \alpha \mu_{3-1}\right).
\end{align}
We interpret \eqref{eq_mu3-1} as an ordinary differential equation for $\mu_{3-1}$. From the matching condition \eqref{matchingD0} we get the asymptotic boundary conditions $\partial_z \mu_{3-1}(-\infty) = \partial_z \mu_{3-1}(\infty) = 0$. Now we need to distinguish between the cases $\alpha = 0$ and $\alpha > 0$.

For $\alpha=0$, integrating over equation \eqref{eq_mu3-1} results in
\begin{align}
\label{eq_nu_r}
- \nu (\Sigma_3+\Sigma_1) = (\Sigma_3+\Sigma_1) r(c^{in}_0).
\end{align}
This is a compatibility condition for the existence of solutions to \eqref{eq_mu3-1} (note that $r(c^{in}_0)$ is constant because of \eqref{eq_c0_const}). When fulfilled, any constant function is a solution to \eqref{eq_mu3-1}.

For $\alpha>0$ consider first the homogeneous part of \eqref{eq_mu3-1}, that is
\begin{align*}
\Big(-\partial_{zz} + (\Sigma_1 + \Sigma_3)  (\partial_z \phi^{in}_{3,0}) \alpha \Big) \mu = 0.
\end{align*}
This allows only for the solution $\mu=0$. Therefore the unique solution to \eqref{eq_mu3-1} is given by
\begin{align*}
\mu_{3-1}(z) = \frac{1}{\alpha} (\nu + r(c^{in}_0)).
\end{align*}
Rearranging this, we can express the velocity of the interface as
\begin{align}
\label{eq_nu}
\nu = \alpha\mu_{3-1}(z) - r(c^{in}_0).
\end{align}
Note that this expression also holds true for the case $\alpha = 0$, following from \eqref{eq_nu_r}.

\paragraph{Expansion of \eqref{rmodel4}, $O(\eps^{-1})$:} Consider $\Gamma_{23}$. Arguing similar as above we find that the leading order expansion
\begin{align*}
- \nu \partial_z \phi^{in}_{2,0} - \frac{1}{\Sigma_1}\partial_{zz} \mu^{in}_{2,0} = 0
\end{align*}
allows for each constant function $\mu^{in}_{2,0}$ as a solution, as long as the compatibility condition
\begin{align}
\label{eq_nu_0}
\nu = 0
\end{align}
is fulfilled. With the same argument applied to the equation for $\phi_1$ we conclude $\mu^{in}_{1,0}$ to be constant. 

The compatibility condition \eqref{eq_nu_0} corresponds to \eqref{eq_s_0} in the sharp interface formulation.

\paragraph{Expansion of \eqref{rmodel4}, $O(\eps^{-1})$:}
Consider $\Gamma_{12}$. Analogous to the result above we get the compatibility condition
\begin{align}
\label{eqNuV}
\nu  = \vv^{in}_0 \cdot \nv,
\end{align}
and all constant functions $\mu^{in}_{1,0}$, $\mu^{in}_{2,0}$ are solutions. 

The compatibility condition \eqref{eqNuV} corresponds to \eqref{eq_s_v} in the sharp interface formulation.

\subsection{Solution of Inner Expansions, First Order}
\label{sInnerFirst}

\paragraph{Expansion of \eqref{rmodel3}, $O(\eps^{-1})$:}
We only consider the interfaces $\Gamma_{12}$ and $\Gamma_{13}$. Substituting \eqref{modelR}, \eqref{modelrR} and the inner expansions we obtain with \eqref{eq_c0_const}
\begin{align}
\begin{split}
\label{eqRM3FirstOrder}
& - \nu \partial_z(\phi^{in}_{1,0} c^{in}_0) +  \partial_z(\phi^{in}_{1,0} c^{in}_0 \vv^{in}_0)\cdot \nv - \frac{1}{\Sigma_1}\partial_z(c^{in}_0\partial_z \mu^{in}_{1,0}) \\
&\qquad = D \partial_z(\phi^{in}_{1,0}\partial_z c^{in}_1) -c^\ast  q(\Phiv^{in}_0) (r(c^{in}_0) + \alpha \mu^{in}_{1,0}-\alpha \mu^{in}_{3,0}).
\end{split}
\end{align}

In the case of the fluid-solid interface $\Gamma_{13}$ we have $\vv^{in}_0 \cdot \nv = 0$ and $q(\Phiv^{in}_0) = \sqrt{2 W_\text{dw}(\phi_1)} = \partial_z \phi^{in}_{3,0}$, so by integrating we conclude
\begin{align*}
 \nu c^{in}_0 = -D \partial_z c^{in}_1(-\infty) - c^\ast (r(c^{in}_0) + \alpha \mu^{in}_{1,0}-\alpha \mu^{in}_{3,0}).
\end{align*}
With \eqref{eq_nu} and matching condition \eqref{matchingD1} we get
\begin{align}
\nu (c^\ast - c^{in}_0) = D \nabla c^o_0(t,\sv_-) \cdot \nv,
\end{align}
which describes \eqref{eq_rh} of the sharp interface formulation.

If we consider the fluid-fluid interface $\Gamma_{12}$ instead, we have $q(\Phiv^{in}_0) = 0$ and conclude from \eqref{eqRM3FirstOrder}
\begin{align*}
 c^{in}_0 \Big( (\vv^{in}_0\cdot \nv -\nu) \partial_z\phi^{in}_{1,0}  - \frac{1}{\Sigma_1}\partial_{zz} \mu^{in}_{1,0} \Big) = D \partial_z(\phi^{in}_{1,0}\partial_z c^{in}_1).
\end{align*}
With \eqref{eqNuV} and by integrating and matching conditions \eqref{matchingD0}, \eqref{matchingD1}
\begin{align*}
0 = \nabla c^o_0(t,\sv_-) \cdot \nv,
\end{align*}
which corresponds to \eqref{eq_rh2} of the sharp interface formulation.

\paragraph{Expansion of \eqref{rmodel5}, $O(1)$:}
At an interface $\Gamma_{ij}$, consider the difference $\mu_i - \mu_j$. With \eqref{rmodel5} we can write
\begin{align*}
\mu_i - \mu_j = \frac{1}{\eps}\left(\Sigma_i W'_{\text{dw}}(\phi_i) - \Sigma_j W'_{\text{dw}}(\phi_j)\right) - \eps \Sigma_i \Delta \phi_i + \eps \Sigma_j \Delta \phi_j .
\end{align*}
As $0<\phi^{in}_{i,0}<1$ and $\phi^{in}_{i,0} + \phi^{in}_{j,0} = 1$ the $O(1)$ terms of this expansion are given by
\begin{align*}
\mu^{in}_{i,0} - \mu^{in}_{j,0} &= \Sigma_i W''_{\text{dw}}(\phi^{in}_{i,0})\phi^{in}_{i,1} - \Sigma_j W''_{\text{dw}}(\phi^{in}_{j,0})\phi^{in}_{j,1} \\
&\qquad - \Sigma_i \left(-\kappa \partial_z \phi^{in}_{i,0} + \partial_{zz} \phi^{in}_{i,1}\right) + \Sigma_j \left(-\kappa \partial_z \phi^{in}_{j,0} + \partial_{zz} \phi^{in}_{j,1}\right) \\
&= \left(W''_{\text{dw}}(\phi^{in}_{i,0}) - \partial_{zz}\right)\left( \Sigma_i\phi^{in}_{i,1} - \Sigma_j\phi^{in}_{i,1} \right) + (\Sigma_i + \Sigma_j)\kappa \partial_z \phi^{in}_{i,0}.
\end{align*}
We interpret this as a differential equation with $\Sigma_i\phi^{in}_{i,1} - \Sigma_j\phi^{in}_{i,1}$ as the function to solve for. By the Fredholm alternative, this differential equation has a solution if and only if 
\begin{align*}
\int_{-\infty}^\infty (\mu^{in}_{i,0} - \mu^{in}_{j,0}) \partial_z \phi^{in}_{i,0} \diff z = \int_{-\infty}^\infty (\Sigma_i + \Sigma_j)\kappa (\partial_z \phi^{in}_{i,0})^2 \diff z.
\end{align*}
Using the definition of $\sigma_{ij}$ in \eqref{modelsigma} and the fact that $\mu^{in}_{i,0} - \mu^{in}_{j,0}$ does not depend on $z$ we find
\begin{align}
\label{eqMuSigma}
\mu^{in}_{j,0} - \mu^{in}_{i,0} = (\Sigma_i + \Sigma_j) \kappa = \sigma_{ij} \kappa.
\end{align}
With this the compatibility condition \eqref{eq_nu} for the reactive interface $\Gamma_{13}$ reads
\begin{align}
\nu = \alpha\sigma_{13} \kappa - r(c^{in}_0),
\end{align}
which is the interface condition \eqref{eq_s_r_alpha} of the sharp interface formulation.

\paragraph{Expansion of \eqref{rmodel2}$ \cdot \nv$, $O(\eps^{-1})$:}
Let us look at the case of the fluid-fluid interface $\Gamma_{12}$. Condition \eqref{dzv} simplifies the analysis. In particular, we have
\begin{align}
\begin{split}
\label{eq_firstOrderStress}
\nabla \cdot (2 \tilde \gamma (\Phiv^{in}) \nabla^s \vv) 
= \frac{1}{\eps} \partial_z \Big(\gamma(\Phiv^{in}_0) \Big(&(\partial_{z} \vv^{in}_1) \otimes \nv + \nabla_\Gamma \vv^{in}_0  \\
& + \nv \otimes (\partial_{z} \vv^{in}_1) + (\nabla_\Gamma \vv^{in}_0)^t \Big)\Big) \nv + O(1).
\end{split}
\end{align} 
With this, equation \eqref{rmodel2} at order $O(\eps^{-1})$ reads as
\begin{align*}
&-\nu \partial_z (\rho^{in}_{f,0}  \vv^{in}_0 ) + (\partial_z \rho^{in}_{f,0}) (\nv \cdot \vv^{in}_0) \vv^{in}_0 + \partial_z p^{in}_0 \nv  + \mu^{in}_{2,0} \partial_z \phi^{in}_{1,0} \nv + \mu^{in}_{1,0} \partial_z \phi^{in}_{2,0} \nv \\
&\qquad= \partial_z \Big(\gamma(\Phiv^{in}_0) ((\partial_{z} \vv^{in}_1) \otimes \nv + \nabla_\Gamma \vv^{in}_0+ \nv \otimes (\partial_{z} \vv^{in}_1) + (\nabla_\Gamma \vv^{in}_0)^t )\Big)\nv.
\end{align*}
With \eqref{eqNuV} the first two terms cancel out. Using the fact that $\mu^{in}_{1,0}$ and $\mu^{in}_{2,0}$ are constant, integrating over $z$ and applying matching condition \eqref{matchingJacobian0} yields
\begin{align*}
\jump{p}\nv + \mu^{in}_{1,0} \nv - \mu^{in}_{2,0} \nv = \jump{\gamma(\Phiv^o_0) (\nabla \vv^o_0 + (\nabla \vv^o_0)^t) } \nv.
\end{align*}
We use \eqref{eqMuSigma} to conclude the interface condition
\begin{align*}
\jump{pI - 2 \gamma \nabla^s \vv^o_0} \nv = \sigma_{12} \kappa \nv,
\end{align*}
corresponding to \eqref{eq_jump_stress} of the sharp interface formulation.

\paragraph{Expansion of \eqref{rmodel2}$ \cdot \tauv$, $O(\eps^{-1})$:}
Finally, for the fluid-solid interface $\Gamma_{13}$ and $\Gamma_{23}$, we again use conditions \eqref{dzv} and \eqref{eq_firstOrderStress}. Note that
\begin{align*}
(\nabla_\Gamma \vv^{in}_0) \nv = 0,
\end{align*}
as the surface gradient is perpendicular to the surface normal, and
\begin{align*}
(\nabla_\Gamma \vv^{in}_0)^t \nv = \nabla_\Gamma (\vv^{in}_0 \cdot \nv) = 0.
\end{align*}
With this, equation \eqref{rmodel2} at order $O(\eps^{-1})$ reads as
\begin{align*}
&-\nu \partial_z (\rho^{in}_{f,0}  \vv^{in}_0 ) + \phi^{in}_{f,0} \partial_z p^{in}_0 \nv  -\mu^{in}_{2,0} \phi^{in}_{f,0} \partial_z \left(\frac{\phi^{in}_{1,0}}{\tilde \phi^{in}_{f,0}}\right) \nv -\mu^{in}_{1,0} \phi^{in}_{f,0} \partial_z \left(\frac{\phi^{in}_{2,0}}{\tilde \phi^{in}_{f,0}} \right)\nv \\
&\qquad= \partial_z \Big(\gamma(\Phiv^{in}_0) ((\partial_{z} \vv^{in}_1) \otimes \nv + \nv \otimes (\partial_{z} \vv^{in}_1) \Big)\nv + \frac{1}{2}\rho_1 \vv^{in}_0 q(\Phiv^{in}_0)\nu,
\end{align*}
where we used \eqref{eq_nu}, \eqref{eq_nu_0} for the reaction term.
We only consider the tangential component of this equation. That is, we multiply with an arbitrary vector $\tauv \perp \nv$ and get
\begin{align*}
&-\nu \partial_z (\rho^{in}_{f,0}  \vv^{in}_0\cdot \tauv ) = \partial_z \Big(\gamma(\Phiv^{in}_0) \partial_{z} \vv^{in}_1 \cdot \tauv \Big) + \frac{1}{2} \nu \rho_1  \partial_z \phi^{in}_{3,0} \vv^{in}_0\cdot \tauv.
\end{align*}
Integrating and using \eqref{matching0} and \eqref{matchingD1} we get the interface condition
\begin{align}
\label{eq_stress_jump}
\frac{1}{2}\nu \rho_1 \vv^o_0 \cdot \tauv = \jump{\gamma \partial_\nv (\vv^o_0 \cdot \tauv)},
\end{align}
which is condition \eqref{eq_solid_fluid_v2} of the sharp interface formulation for $\Gamma_{13}$ and  $\Gamma_{23}$.

We remark that the left hand side term in \eqref{eq_stress_jump} exists due to the fact that the $\delta$-$2f1s$-model preserves kinetic energy instead of momentum during precipitation and dissolution.
\begin{figure}[!bp]
\centering
 \begin{tikzpicture}
\begin{scope}[scale = 3.5]

\def\vecpos{0.}
\def\textpos{0.8}
\def\s3{1.7320508}
\def\vdist{0.0em}
\def\vlength{2.5em}

\draw (0,0) to[out=125,in=-5] 
		node [fill=white,pos=\textpos] {$\Gamma_{12}$} 
		node (n01) [shape=coordinate,sloped,pos=\vecpos,xshift=\vlength/5,yshift=\vdist] {}
		node (n03) [shape=coordinate,sloped,pos=\vecpos,xshift=\vlength,  yshift=\vdist]{}
		(-0.8,0.3);
\draw (0,0) to[out=-120,in=0] 
		node [fill=white,pos=\textpos] {$\Gamma_{13}$} 
		node (n11) [shape=coordinate,sloped,pos=\vecpos,xshift=\vlength/5,yshift=-\vdist] {}
		node (n13) [shape=coordinate,sloped,pos=\vecpos,xshift=\vlength,  yshift=-\vdist]{}
		(-0.8,-0.3);
\draw (0,0) to[out=20,in=170] 
		node [fill=white,pos=\textpos] {$\Gamma_{23}$} 
		node (n21) [shape=coordinate,sloped,pos=\vecpos,xshift=-\vlength/5,yshift=\vdist] {}
		node (n23) [shape=coordinate,sloped,pos=\vecpos,xshift=-\vlength,  yshift=\vdist]{}
		(0.8,0.1);

\draw[-stealth] (n01) --  (n03) node[right] {$\tauv_{12}$};

\draw[-stealth] (n11) --  (n13) node[right] {$\tauv_{13}$};

\draw[-stealth] (n21) --  (n23) node[above] {$\tauv_{23}$};

\end{scope}
\end{tikzpicture}
 \caption{Vectors at the triple junction}
  \label{Figure_tp2}
\end{figure}
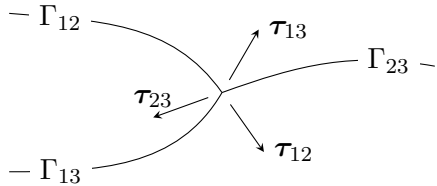
\begin{remark}
Considering the normal component of \eqref{rmodel2} at a fluid-solid interface leads to
\begin{align*}
\phi^{in}_{f,0} \partial_z \left( p^{in}_0 -\mu^{in}_{2,0} \frac{\phi^{in}_{1,0}}{\tilde \phi^{in}_{f,0}} -\mu^{in}_{1,0} \frac{\phi^{in}_{2,0}}{\tilde \phi^{in}_{f,0}}\right) = \partial_z \Big(2 \gamma(\Phiv^{in}_0) \partial_{z} \vv^{in}_1 \cdot \nv \Big).
\end{align*}
As we do not expect the right hand side to vanish, $\phi^{in}_{f,0} \partial_z p^{in}_0$ has to balance this term. That means that in the region where $\phi^{in}_{f,0}$ gets small, the assumption of $\partial_z p^{in}_0 = O(1)$ is no longer valid. Indeed, numerical simulations show that $p$ can oscillate in the solid part of a fluid-solid interface.
\end{remark}

\subsection{Triple Point Expansions}

As we have three bulk phases $\Phiv^o_0=\ev_1,\ev_2,\ev_3$ there are regions where these three phases meet. In the two-dimensional case we assume that the three phases meet at distinct points, called triple points. In the three-dimensional case we assume they meet at distinct lines, called triple lines.

In two dimensions the analysis of the triple points 
\begin{align*}
\Gamma_{123}(t) = \set{\xv \in \Omega: \Phiv(t,\xv) = (1/3,1/3,1/3)^t }
\end{align*}
can be done exactly as in Ref.~\cite{Bronsard1993,Garcke98}. For this one introduces local coordinates around a $ \hat \xv \in \Gamma_{123}$ 
and assumes that 
solutions to the $\delta$-$2f1s$-model can be written in terms of \textbf{triple point expansions} in these local coordinates. After matching the triple point expansions with the inner expansions of the three interfaces $\Gamma_{12}, \Gamma_{13}, \Gamma_{23}$ one obtains in leading order the condition
\begin{align}
\label{eq_tp_sigma}
0 &= \sum_{ij\in\set{12,13,23}} \sigma_{ij} \tauv_{ij},
\end{align}
where $\tauv_{ij}$ is the tangential unit vector of $\Gamma_{ij}$ at $\hat \xv$, as shown in Figure \ref{Figure_tp2}.

Condition \eqref{eq_tp_sigma} is equivalent to the contact angle condition \eqref{eq_contactangle} in the sharp interface formulation.

For the three-dimensional case, the analysis of the triple lines can be done exactly as in Ref.~\cite{stinner18pre}. We recover \eqref{eq_contactangle} on the plane perpendicular to the triple line.

\bibliographystyle{siamplain}
\bibliography{references}
\end{document}